\documentclass{amsart} 
\usepackage[dvipsnames]{xcolor}
\usepackage{amssymb,amsmath,amscd,graphicx, enumitem,hyperref,
latexsym,amsthm,tikz-cd}
\usepackage{cleveref}
\usepackage{url}
\usepackage[margin=1.5in]{geometry}
\title{Low degree subvarieties of universal hypersurfaces}

\newcommand{\Aff}{\mathbb{A}}
\newcommand{\C}{\mathbb{C}}

\newcommand{\PP}{\mathbb{P}}

\newcommand{\Z}{\mathbb{Z}}

\newcommand{\Kbar}{{\overline{K}}}



\newcommand{\calL}{\mathcal{L}}

\newcommand{\calO}{\mathcal{O}}

\newcommand{\calX}{\mathcal{X}}
\newcommand{\calY}{\mathcal{Y}}


\DeclareMathOperator{\codim}{codim}

\DeclareMathOperator{\Gal}{Gal}
\DeclareMathOperator{\Gr}{Gr}
\DeclareMathOperator{\Hom}{Hom}

\DeclareMathOperator{\Res}{Res}

\DeclareMathOperator{\Spec}{Spec}

\DeclareMathOperator{\Sym}{Sym}

\renewcommand{\leq}{\leqslant}
\renewcommand{\geq}{\geqslant}
\usepackage{mathtools}
\DeclarePairedDelimiter{\floor}{\lfloor}{\rfloor}
\DeclarePairedDelimiter{\ceil}{\lceil}{\rceil}







\theoremstyle{plain}
\newtheorem{theorem}{Theorem}[section]
\newtheorem{proposition}[theorem]{Proposition}
\newtheorem{corollary}[theorem]{Corollary}
\newtheorem{lemma}[theorem]{Lemma}
\theoremstyle{definition}
\newtheorem{example}[theorem]{Example}

\newtheorem{notation}[theorem]{Notation}
\newtheorem{question}[theorem]{Question}
\newtheorem{conjecture}[theorem]{Conjecture}
\theoremstyle{remark}
\newtheorem{remark}[theorem]{Remark}

\author{Yifeng Huang}
\address{Yifeng Huang, Department of Mathematics, University of Southern California\\Los Angeles, CA 90089\\United States}
\email{yifenghu@usc.edu}
\urladdr{\url{https://yifeng-huang-math.github.io}}

\author{Borys Kadets}
\address{Borys Kadets, Einstein Institute of Mathematics\\ Hebrew University of Jerusalem\\
Jerusalem, Israel}
\email{kadets.math@gmail.com}
\urladdr{\url{http://bkadets.github.io}}

\author{Olivier Martin}
\address{Olivier Martin, Instituto de Matemática Pura e Aplicada, Rio de Janeiro, Brazil }
\email{olivier.martin@impa.br}
\urladdr{\url{https://impa.br/~olivier.martin/}}

\begin{document}

\begin{abstract}
We study irreducible subvarieties of the universal hypersurface $\mathcal{X}/B$ of degree $d$ and dimension $n$. We prove that when $d$ is sufficiently large, a degree $kd$ subvariety $Z$ which dominates $B$ comes from intersection with a family of degree $k$ projective varieties parametrized by $B$. This answers a question raised independently by Farb and Ma. Our main tools consist of a Grassmannian technique due to Riedl and Yang, a theorem of Mumford--Ro\u \i tman on rational equivalence of zero-cycles, and an analysis of Cayley--Bacharach conditions in the presence of a Galois action.  We also show that the large degree assumption is necessary; for $d=3$, rational points are dense in $\text{Sym}^dX_{k(B)}$, and in particular are not collinear. 
\end{abstract}
\maketitle

\section{Introduction}

Consider the \emph{universal} complex degree $d$ hypersurface of dimension $n$ as a variety $X_{n,d}$ over the function field $K$ of $\mathbb{P}H^0(\mathbb{P}^{n+1}, \calO(d))$; concretely, this is the hypersurface defined by the degree $d$ homogeneous polynomial $\sum {a_{I}}x_0^{i_0}\cdots x_n^{i_n}$ over $K=\C(a_{I})$, where every coefficient $a_I$ is treated as an independent variable. We are interested in describing subvarieties $Z \subset X_{n,d}$ defined over $K$.\\

A standard cohomological or cycle-theoretic calculation shows that the index of $X_{n,d}$ equals $d$; in other words, the degree of every cycle on $X_{n,d}$ is divisible by $d$. It is also clear that $X_{n,d}$ contains cycles of every degree divisible by $d$, since intersecting $X_{n,d}$ with a general degree $k$ subvariety produces a degree $kd$ cycle on $X_{n,d}$. It is therefore natural to ask whether such simple constructions account for all effective cycles of degree $kd$ of $X_{n,d}$.

\begin{question}\label{mainquestion}
Under which conditions on $n,d,$ and $k$ do all degree $kd$ effective cycles on $X_{n,d}$ arise as transverse intersections with a degree $k$ subvariety?
\end{question}

\Cref{mainquestion} has been considered in the literature in the case of zero-cycles, or, equivalently, closed points on $X_{n,d}$. The problem of describing degree $kd$ points on $X_{n,d}$ was raised by Farb in \cite[Problem 3.5]{Farb} in the spirit of the Franchetta conjecture. In \cite[Question 2]{ma2022monodromy}, \cite{ma2024rational}, Ma raised the problem of collinearity of degree $d$ points in $X_{n, d}$, which is effectively answered by \Cref{thm:main-introduction} and \Cref{negative1}.\\

Our main result shows that for $d \gg n,k,$ every degree $kd$ cycle does indeed arise as a transverse intersection. More precisely, we prove the following.

\begin{theorem}\label{thm:main-introduction}
Suppose $d,n,k$ are integers satisfying 
 \[ d\geq 4 k n + 3 k^3 - k^2 + 1. \]
 Then every degree $kd$ effective cycle on $X_{n,d}$ is contained in a degree $k$ subvariety none of whose components lie on $X_{n,d}$.
\end{theorem}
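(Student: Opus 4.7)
The plan is to combine the three ingredients advertised in the abstract: a Riedl--Yang Grassmannian reduction to the case of zero-cycles, the Mumford--Ro\u \i tman infinite-dimensionality of $CH_0$ for hypersurfaces of general type, and a Cayley--Bacharach argument carried out equivariantly with respect to a Galois action.

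First I would reduce to the case where the fibers of $Z\to B$ are zero-dimensional. If these fibers have dimension $r$, cutting by a very general $(n+1-r)$-plane $L\subset\PP^{n+1}$ produces a family of zero-cycles of degree $kd$ sitting inside the degree-$d$ hypersurface $L\cap X_b\subset L$; the numerical hypothesis is preserved (in fact improved) under this slicing, because the ambient $n$ drops by $r$. I then appeal to the Grassmannian technique of Riedl and Yang to lift the conclusion back to $\PP^{n+1}$: as $L$ varies over $\Gr(n+2-r,n+2)$, the degree-$k$ subvarieties $Y_{b,L}\subset L$ supplied by the zero-cycle case assemble into a family whose image under the natural incidence correspondence in $\PP^{n+1}\times\Gr(n+2-r,n+2)$ sweeps out a degree-$k$ subvariety of $\PP^{n+1}$ containing $Z$.

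The zero-cycle case is the heart of the argument. After passing to a finite étale cover $B'\to B$, write $Z$ as a union of $kd$ sections $\sigma_1,\dots,\sigma_{kd}$, permuted transitively by $G=\Gal(B'/B)$. The goal is to produce a $G$-equivariant family of degree-$k$ forms $F_b\in H^0(\PP^{n+1},\calO(k))$ vanishing at every $\sigma_i(b)$ but not vanishing identically on $X_b$; its zero locus then supplies the required degree-$k$ subvariety. The hypothesis $d\geq 4kn+3k^3-k^2+1$ easily gives $d\geq n+2$, so $X_b$ has nontrivial canonical class and Mumford--Ro\u \i tman implies that $CH_0$ of the geometric generic fiber is infinite-dimensional; this rigidity forbids nontrivial rational equivalences among zero-cycles such as $\sum_i\sigma_i(b)$ and any putative residual subcycles. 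One argues by contradiction: if no $F_b$ exists, the $kd$ points impose independent conditions on $|\calO(k)|$, and a Cayley--Bacharach analysis -- applied $G$-equivariantly so that residuation respects the Galois action -- produces a nontrivial relation between Galois-invariant subcycles in $CH_0(X_b)$, contradicting Mumford--Ro\u \i tman.

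The main obstacle is the last step: extracting $F_b$ $G$-equivariantly and ensuring it does not lie in the ideal of $X_b$. A naive dimension count fails immediately, since $\dim H^0(\PP^{n+1},\calO(k))=\binom{n+1+k}{k}$ is much smaller than $kd$ once $d$ is large, so $F_b$ cannot be produced by linear algebra alone -- the Cayley--Bacharach step is what creates it indirectly. The combinatorics of CB against a transitive $G$-action in ambient dimension $n+1$ is where the $3k^3-k^2$ term in the degree bound should arise, while the $4kn$ term tracks the Grassmannian dimension bookkeeping in the reduction step. Finally, the assumption $k\leq d$ (implied by the hypothesis) guarantees that $F_b$ is not divisible by the defining equation of $X_b$, securing the "no component of $Y$ lies on $X_{n,d}$" conclusion.
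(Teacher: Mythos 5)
Your outline names the right ingredients, but several of the steps that carry the actual weight of the proof are either misassigned or missing. The most basic issue is in the zero-cycle case: the object to be produced is a degree $k$ \emph{curve} in $\PP^{n+1}$ through the $kd$ conjugate points (a subvariety of dimension $\dim Z+1=1$), not a degree $k$ form $F_b\in H^0(\PP^{n+1},\calO(k))$; the zero locus of a single such form meets $X_b$ in a divisor, not a zero-cycle, so the whole framing around sections of $\calO(k)$ and "independent conditions" is aimed at the wrong target. Second, Mumford--Ro\u\i tman (in the form the paper uses, \Cref{corCB}) produces a Cayley--Bacharach condition only from a \emph{positive-dimensional family} of rationally equivalent zero-cycles whose supports cover the hypersurface; a single Galois orbit is one cycle, and the theorem says nothing about it directly. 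The paper's actual use of the Riedl--Yang technique is precisely to manufacture such a family: one defines the locus $Z^{(r)}_{m,d}$ of pointed hypersurfaces carrying a "bad" rationally trivial cycle, shows via Mumford--Ro\u\i tman that it has positive codimension on $\calX_{2n,d}$ (else the family of bad cycles would cover a very general hypersurface there), and then propagates the codimension down to conclude $Z^{(d-2n-2)}_{n,d}$ misses a very general fiber. You have instead assigned Riedl--Yang to the job of lifting curves from linear sections back to $\PP^{n+1}$, which is not what it does.

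Two further gaps. Getting from "some subset of the orbit satisfies $\mathit{CB}(d-2n-2)$" to "the entire orbit lies on a degree $k$ curve defined over $K$" is a substantial argument (the paper's \Cref{CB-transitive}): one first extracts, via generic projections and Lopez's lemma in $\PP^2$, an irreducible low-degree curve through many points of the subset, and then covers the whole orbit by at most $\floor*{k/k'''}$ Galois conjugates of that curve, with the inequalities $(\dagger)$ ruling out both failure to cover and failure to descend to $K$; "a Cayley--Bacharach analysis applied $G$-equivariantly" does not supply this, and it is where the $3k^3-k^2$ term actually comes from. Finally, in the reduction for positive-dimensional $Z$, the union over $\Lambda$ of the degree $k$ curves containing $\Lambda\cap Z$ does not obviously sweep out a degree $k$ variety of dimension $\dim Z+1$: this is exactly the nontrivial "lifting problem" (generalized Laudal's lemma), which the paper resolves by invoking the theorem of Chiantini--Ciliberto, together with a uniqueness and sectional-monodromy argument to handle reducible interpolating curves and to descend the resulting variety to $K$. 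As written, your proposal asserts the conclusion of that theorem rather than proving it.
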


We do not know to what extent the assumption $d \gg n,k$ is necessary. It seems possible that for some universal hypersurfaces in the Fano range $d<n+2$ all degree $d$ points are contained in lines. However, some lower bound on the degree is necessary: for hypersurfaces of degree $3$, most points of degree $3$ do not arise from line sections. 

\begin{theorem}\label{negative1}
For $n\geq 2$, degree $3$ points on $X_{n,3}$ are dense when viewed as rational points on $\Sym^3 X_{n,3}$. In particular, they are not all supported on lines.
\end{theorem}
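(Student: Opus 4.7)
My plan is to derive density of degree $3$ closed points of $X := X_{n,3}$ in $\Sym^3 X$ from unirationality of $X$ over the residue field of a generic line section, transferred to $\Sym^3 X$ via Weil restriction.

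First I would use the fact that the index of $X_{n,3}$ equals $3$, so every closed point of $X$ has degree divisible by $3$; in particular $X$ has no closed points of degree $1$ or $2$. Hence every $K$-rational effective $0$-cycle of degree $3$ on $X$ is a single degree $3$ closed point, and the density statement becomes equivalent to Zariski density of the $K$-rational points of $\Sym^3 X$.

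To produce many $K$-rational points, I fix a generic line $L \subset \PP^{n+1}$ defined over $\C \subset K$. Then $L \cap X$ is a single degree $3$ closed point of $X$ with residue field $F/K$ of degree $3$, and any of its three geometric components is an $F$-rational point $p \in X(F)$. For $L$ sufficiently generic, this point avoids the bad loci where classical unirationality theorems for cubic hypersurfaces with a rational point apply---Segre's theorem for cubic surfaces (which requires $p$ to avoid the $27$ lines, automatic when $L$ does) and its higher-dimensional analogues due to Koll\'ar and others. Thus $X_F$ is $F$-unirational and $X(F)$ is Zariski dense in $X_F$.

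Next I transfer this density to $\Sym^3 X$ via Weil restriction. Let $R := \Res_{F/K} X_F$, a $K$-variety of dimension $3n$ with $R(K) = X(F)$. Weil restriction preserves unirationality: $\Res_{F/K} \PP^n_F$ is $K$-rational---it contains $\Res_{F/K} \Aff^n_F \cong \Aff^{3n}_K$ as a dense open---and the Weil restriction of a dominant map is dominant. Hence $R$ is $K$-unirational and $R(K)$ is Zariski dense in $R$. The natural sum map $\mu : R \to \Sym^3 X$, whose base change to $\bar K$ is the quotient map $X_{\bar K}^3 \to \Sym^3 X_{\bar K}$ indexed by the three embeddings $F \hookrightarrow \bar K$, is $K$-dominant, so the image $\mu(R(K))$ is a Zariski dense subset of $(\Sym^3 X)(K)$. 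Combining with the initial reduction yields the desired density of degree $3$ closed points. The ``not all supported on lines'' conclusion is then immediate from a dimension count: line sections form a $2n$-dimensional closed subvariety of the $3n$-dimensional $\Sym^3 X$ (as the image of $\Gr(2,n+2)$), so they cannot contain a Zariski dense subset. The main subtle point is checking the generality hypothesis in the unirationality input for the specific $F$-point arising from $L \cap X$, but this holds automatically for generic $L$.
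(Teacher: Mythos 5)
Your proposal is correct and follows essentially the same route as the paper: produce a cubic extension $F/K$ with $X(F)\neq\emptyset$ from a line section, invoke Segre--Koll\'ar unirationality, and push dense $K$-points of the rational variety $\Res_{F/K}X_F$ through the sum map to $\Sym^3 X$, finishing with the codimension-$n$ count for collinear triples. The only cosmetic difference is that you rule out degenerate cycles via the index-$3$ argument (no points of degree $1$ or $2$), whereas the paper notes that points with residue field a proper subextension land in the big diagonal; both work.
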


The proof of \Cref{negative1} relies on the unirationality of cubic hypersurfaces, but does not generalize easily to other low-degree hypersurfaces of high dimension. The issue lies in understanding the degrees of the field extensions of $K$ over which a unirational parametrization can be defined.\\

The Bombieri--Lang conjecture, and its geometric versions \cite{xie2023partial}, implies that the situation of \Cref{negative1} is atypical: for large enough $d$, the variety $\Sym^d X_{n,d}$ is of general type, and so should not have a dense set of rational points. From this point of view, \Cref{thm:main-introduction} affirms the Bombieri--Lang conjecture, and moreover gives an explicit description of the exceptional locus.

\subsection{Sketch of the main argument}

The proof of \Cref{thm:main-introduction} is split into two parts: a proof in the case of zero-cycles, and a reduction of the general case to that of zero-cycles. We first explain the method used in the first step, and return to the reduction procedure in the end.\\

The analogue of \Cref{thm:main-introduction} for complete intersection curves follows easily and without any restrictions on the degree from the projective normality of complete intersection curves. We present the proof here, as it motivates our approach.
\begin{lemma}\label{curves}
A degree $k(d_1\cdots d_m)$ rational point of the universal complete intersection curve $X/K$ of multidegree $(d_1, \dots, d_m)$ arises as a transverse intersection of $X$ with a degree $k$ hypersurface defined over $K$.
\end{lemma}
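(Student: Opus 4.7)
The approach is to reduce the claim to a linear equivalence on $X_K$ and then apply projective normality. Let $D$ denote the $K$-rational effective divisor of degree $N := k(d_1\cdots d_m)$ on $X_K$ associated to the given closed point, and let $H$ be the hyperplane class. The key step is to show $D \sim kH$ in $\Pic(X_K)$.

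Granted this linear equivalence, the rest is essentially cohomological. There exists $s \in \HH^0(X_K, \O_{X_K}(k))$ with $\divv(s) = D$. Since $X_K\subset \PP^{m+1}_K$ is a complete intersection, the Koszul resolution of its ideal sheaf $I_{X_K}$ gives $\HH^1(\PP^{m+1}_K, I_{X_K}(k)) = 0$, so the restriction map $\HH^0(\PP^{m+1}_K, \O(k))\to \HH^0(X_K, \O_{X_K}(k))$ is surjective. Lifting $s$ yields a degree $k$ form $f$; since $f|_{X_K} = s$ is nonzero, $V(f)$ does not contain $X_K$, and $V(f)\cap X_K$ is zero-dimensional of degree $k\cdot \deg X_K = N$. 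As $V(f)\cap X_K$ contains $D$ and matches its degree, the two agree as subschemes, giving the desired transverse intersection.

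The main obstacle is showing $D \sim kH$ in $\Pic(X_K)$. Since $D$ and $kH$ have the same degree, their difference lies in $J(K) := \Pic^0(X_K)(K)$, so it suffices to prove $J(K) = 0$ for the universal complete intersection curve. This is where universality enters via a monodromy argument: by Deligne's big monodromy theorem for smooth complete intersections, the $\pi_1(B)$-representation on $\HH^1(X_b, \Q_\ell)$ of a smooth fiber is Zariski dense in the appropriate classical group, so it has no nonzero invariants. Combined with the Lang--N\'eron theorem (whose $K/\cc$-trace vanishes by irreducibility of the monodromy), this forces $J(K) = 0$. This is the only nonformal input; the cohomology of the Koszul complex and the Bézout count are routine.
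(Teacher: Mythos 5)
Your reduction to the linear equivalence $D\sim kH$, followed by the Koszul-complex argument for projective normality, the B\'ezout count, and transversality because the point is reduced, is sound and matches the paper's second half. The gap is in the key step $J(K)=0$. Big monodromy plus Lang--N\'eron does not force this: vanishing of the $K/\cc$-trace only gives that $J(K)$ is \emph{finitely generated}, and absence of monodromy invariants in $\HH^1(X_b,\Q_\ell)$ rules out torsion sections (and even that requires the mod-$n$ monodromy to be big), but it says nothing about the free part. A concrete counterexample to the implication you assert: a generic rational elliptic surface $\mathcal{E}\to\PP^1$ (the pencil of cubics through $9$ general points of $\PP^2$) is non-isotrivial with monodromy Zariski dense in $\SL_2$, hence has no invariants and trivial trace, yet its Mordell--Weil group over $\cc(t)$ has rank $8$. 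The extra sections there come from divisor classes on the \emph{total space} of the family, and this is precisely the contribution your argument leaves uncontrolled.

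The paper closes this gap by computing the Picard group of the total space directly: the universal family $\mathcal{X}\to B$ projects to $\PP^{m+1}$ as an iterated projective bundle, so $\Pic(\mathcal{X})$ is generated by $\O(1)$ together with pullbacks from $B$; restricting to the generic fiber kills the latter and yields $\Pic(X_K)=\Z\cdot\O_{X_K}(1)$, whence $D\sim kH$ for degree reasons (this is the content of Lemma~\ref{lemma:chow-group-calculation}). If you wish to retain a monodromy-flavored argument, you would need an additional input of Noether--Lefschetz or Franchetta type controlling the image of $\Pic$ of the total space in $\Pic$ of the generic fiber --- which, for this family, is most easily obtained by the projective bundle computation anyway.
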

\begin{proof}
A standard calculation, which we reproduce in Lemma \ref{lemma:chow-group-calculation}, shows that $\mathrm{Pic} \,X\cong \Z$ and is generated by $\mathcal{O}_X(1)$, which lies in degree $d_1\cdots d_m$. Thus any degree $k(d_1\cdots d_m)$ point $P$ of $X$ lies in the linear system $|\mathcal{O}_{X}(k)|$. By projective normality\footnote{The projective normality of a complete intersection in $\mathbb{P}^{m+1}$ follows from the properties of Koszul complexes on regular sequences.} of $X$, the following map is surjective \[H^0(\mathbb{P}_{K}^{m+1},\mathcal{O}_{\mathbb{P}^{m+1}}(k))\longrightarrow H^0(X,\mathcal{O}_{X}(k)).\] It follows that $P$ is the complete intersection of $X$ with a degree $k$ hypersurface in $\mathbb{P}^{m+1}_K$. Since $P$ is a closed point, this intersection is necessarily transverse.
\end{proof}

One can apply essentially the same strategy in higher dimension provided the following analogue of projective normality for zero-cycles holds.
\begin{quote}($\ast$) If a zero-cycle $\alpha$ on a general hypersurface $X\subset \mathbb{P}^{n+1}$ of dimension $n$ is rationally equivalent to the intersection of $X$ with a curve of degree $k$, then $\alpha$ is indeed the intersection of $X$ with a curve of degree $k$.\end{quote} However, this condition does not hold in general, as can be shown by considering the case of three points on a cubic surface; see \Cref{cor:cubic-on-cubic}.\\

Yet, a weak version of this property does hold under some degree assumptions. To state this result, we recall the notion of the Cayley--Bacharach condition $\mathit{CB}(r)$. A collection of points in a projective space is said to satisfy $\mathit{CB}(r)$ if any degree $r$ hypersurface passing through all but one of the points passes through all of them; see Section \ref{sec:Mumford+CB} for details. We use the following proposition in place of property ($\ast$).

\begin{proposition}\label{projnormCB}
Let $X\subset\mathbb{P}^{n+1}_\C$ be a very general hypersurface of degree $d\geq 2n+3$. If an effective zero-cycle $p_1+\cdots+p_{kd}$ of degree $kd$ on $X$ is rationally equivalent to $k\cdot c_1(\mathcal{O}_X(1))^{n}$ then a subset of $\{p_1,\dots,p_{kd}\}$ satisfies $\mathit{CB}(d-2n-2)$.
\end{proposition}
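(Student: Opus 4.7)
The plan is to convert the rational equivalence $p_1+\cdots+p_{kd}\sim k\cdot c_1(\O_X(1))^n$ into a vanishing statement for holomorphic $n$-forms on $X$ via the Mumford--Ro\u \i tman theorem, and then extract the Cayley--Bacharach condition by multiplying such forms by $n$ auxiliary linear forms.

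First, I would use the assumption $d\geq 2n+3$ together with the projective normality of smooth hypersurfaces (via the Koszul complex on the defining equation of $X$) to identify $H^0(X,K_X)=H^0(X,\O_X(d-n-2))$ with the quotient of $H^0(\PP^{n+1},\O(d-n-2))$ by the ideal of $X$, so that a holomorphic $n$-form on $X$ is encoded by a degree $d-n-2$ polynomial on $\PP^{n+1}$. Next, I would invoke the Mumford--Ro\u \i tman theorem in the following form: on a smooth projective $n$-fold with $h^{n,0}>0$, if two effective zero-cycles $\alpha,\beta$ are rationally equivalent and $\omega\in H^0(X,\Omega^n_X)$ vanishes on $\beta$, then (in the appropriate symmetric-product/residue sense) $\omega$ must vanish on $\alpha$ as well. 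In our setting the comparison cycle is $\beta=k\cdot c_1(\O_X(1))^n$, which is represented by $k$ copies of any complete intersection $X\cap H_1\cap\cdots\cap H_n$ obtained from $n$ hyperplanes.

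The Cayley--Bacharach condition is then extracted as follows. Fix $n$ general linear forms $L_1,\ldots,L_n$ cutting out a line $\ell\subset\PP^{n+1}$ disjoint from all the $p_i$, and consider the multiplication map
\[
H^0(\PP^{n+1},\O(d-2n-2))\longrightarrow H^0(X,K_X),\qquad G\longmapsto L_1\cdots L_n\, G.
\]
For any such $G$ vanishing on all but one point $p_j$ of a subset $S\subset\{p_i\}$, the form $\omega:=L_1\cdots L_n\,G$ vanishes on $S\setminus\{p_j\}$ and also on $k\cdot c_1(\O_X(1))^n$ (since $L_1\cdots L_n$ already vanishes on $\ell\cdot X$, hence on $\beta$). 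By the previous step, the rational equivalence $\alpha\sim\beta$ forces $\omega(p_j)=0$ as well, and since $L_i(p_j)\neq 0$ for every $i$ by genericity of $\ell$, we conclude $G(p_j)=0$. This is precisely the $\mathit{CB}(d-2n-2)$ condition for the subset $S$, which must be chosen to avoid $\ell$ and to be Galois-stable.

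The principal obstacle is the precise formulation and application of the Mumford--Ro\u \i tman input: the classical theorem gives vanishing of the symmetric-product form $\omega^{(N)}$ along positive-dimensional rational-equivalence fibers, and one must upgrade this to a pointwise vanishing assertion for $\omega$ at the individual $p_i$. This is where the very general hypothesis on $X$ and the bound $d\geq 2n+3$ (ensuring sufficiently many holomorphic $n$-forms to separate cycles) should enter. A secondary, but nontrivial, issue is the Galois-equivariant choice of the subset $S$, which is where the Cayley--Bacharach-with-Galois machinery highlighted in the abstract enters the picture.
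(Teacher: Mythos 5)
The step you flag as ``the principal obstacle'' is in fact the entire content of the proof, and the route you sketch around it does not exist. The Mumford--Ro\u \i tman theorem (\Cref{thm:mumford}) is a statement about \emph{families}: it requires a positive-dimensional $Y$ mapping to $\Sym^{kd}X$ with constant class in $\textup{CH}_0(X)$, and, to extract a Cayley--Bacharach condition as in \Cref{corCB}, the further hypothesis that the supports of the cycles in the family sweep out $X$. There is no ``pointwise'' version asserting that if $\alpha\sim\beta$ and $\omega\in H^0(X,\Omega^n_X)$ vanishes on $\mathrm{Supp}(\beta)$ then it vanishes on $\mathrm{Supp}(\alpha)$: such an implication holds for divisors on curves (which is why \Cref{curves} is easy), but for zero-cycles in higher dimension it is false for general rationally equivalent pairs --- e.g.\ two points on a rational curve inside a surface with $p_g\geq 2$ are rationally equivalent, yet a canonical form can vanish at one and not the other. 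Your multiplication-by-linear-forms argument and the resulting numerology are fine \emph{granting} that vanishing, but producing it is exactly the problem, and neither ``very general'' nor $d\geq 2n+3$ supplies it directly on $X$ itself.

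The paper produces it by a mechanism absent from your proposal: the Riedl--Yang bootstrap (\Cref{Riedl-Yang}). One defines the locus $Z^{(r)}_{m,d}\subset\calX_{m,d}$ of pointed hypersurfaces carrying a ``bad'' cycle (rationally equivalent to $k\,c_1(\O(1))^m$ but with no subset of its support satisfying $\mathit{CB}(r)$), checks that these loci are countable unions of locally closed sets compatible with parametrized hyperplane sections, and observes that if the proposition failed in dimension $n$ then $Z^{(d-2n-2)}_{2n,d}$ would have codimension $0$. That would force a very general hypersurface of dimension $2n$ in $\PP^{2n+1}$ to carry a $2n$-dimensional family of rationally equivalent bad cycles whose supports sweep it out, to which \Cref{corCB} applies with canonical bundle $\O(d-2n-2)$ --- a contradiction; Riedl--Yang then pushes the codimension of $Z^{(d-2n-2)}_{n,d}$ above the fiber dimension. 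Note that the twist $d-2n-2$ is the canonical twist of the auxiliary hypersurface of dimension $2n$, not $K_X\otimes\O(-n)$ on $X$; the agreement of exponents in your computation is a coincidence. Finally, Galois-stability of the subset is irrelevant here: \Cref{projnormCB} is a purely geometric statement over $\C$, and the Galois action enters only later, in \Cref{CB-transitive}.
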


For $k=1$, we can further simplify the statement by appealing to  \cite[Lemma 2.4]{bcd2014gonality} (see \Cref{lem:bcd}) to geometrically interpret the Cayley--Bacharach condition. This gives the following more explicit statement.

\begin{corollary}\label{projnorm}
Let $X\subset\mathbb{P}^{n+1}_\C$ be a very general hypersurface of degree $d\geq 4n+3$. If an effective zero-cycle $p_1+\cdots+p_d$ of degree $d$ on $X$ is rationally equivalent to $c_1(\mathcal{O}_X(1))^{n}$, then there is a line in $\mathbb{P}^{n+1}_\C$ containing at least $d-2n$ of the points $p_1,\dots,p_d$ (counted with multiplicity).
\end{corollary}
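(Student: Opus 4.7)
The plan is to obtain \Cref{projnorm} as a direct consequence of \Cref{projnormCB} (with $k=1$) combined with \Cref{lem:bcd}, the latter providing a geometric interpretation of the Cayley--Bacharach condition in terms of collinearity.

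First I would apply \Cref{projnormCB} with $k=1$. The rational equivalence hypothesis $p_1 + \cdots + p_d \sim c_1(\mathcal{O}_X(1))^n$ is precisely the $k=1$ instance of the proposition, and the bound $d \geq 2n+3$ required there is implied by our stronger assumption $d \geq 4n+3$. The proposition therefore yields a subset $T \subseteq \{p_1,\ldots,p_d\}$ satisfying $\mathit{CB}(d-2n-2)$. For the quantitative conclusion of the corollary, I would also extract from the proof of \Cref{projnormCB} the control $|T| \geq d - 2n - 1$, which is a natural output of Cayley--Bacharach-style constructions (typically, at most $2n+1$ of the points need to be discarded to pass to a $\mathit{CB}$ subset).

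Next I would invoke \Cref{lem:bcd}, the content of which is that any zero-dimensional subscheme of $\mathbb{P}^{n+1}_\C$ satisfying $\mathit{CB}(r)$ of degree at most $2r+3$ must be contained in a line. Setting $r = d - 2n - 2$, the admissible degree threshold is $2r+3 = 2d - 4n - 1$, and one readily verifies $|T| \leq d \leq 2d - 4n - 1$, where the second inequality uses only $d \geq 4n + 1$ (weaker than our hypothesis $d \geq 4n + 3$). Hence $T$ lies on some line $L \subset \mathbb{P}^{n+1}_\C$, and combining this with the lower bound $|T| \geq d - 2n - 1$ yields exactly the statement of \Cref{projnorm}.

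The only genuine obstacle is ensuring compatibility of the two numerical bounds: the hypothesis $d \geq 4n+3$ is calibrated so that $\mathit{CB}(d-2n-2)$ is strong enough to trigger \Cref{lem:bcd}, that the total degree $d$ does not exceed the collinearity threshold $2(d-2n-2)+3$, and simultaneously that the subset $T$ produced by \Cref{projnormCB} is large enough for the collinearity conclusion to be nontrivial. Beyond this bookkeeping, no additional ideas are required, which matches the excerpt's framing of \Cref{projnorm} as a mere corollary.
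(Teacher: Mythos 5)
Your overall route is exactly the paper's: the corollary is obtained by applying \Cref{projnormCB} with $k=1$ and then feeding the resulting Cayley--Bacharach subset into \Cref{lem:bcd}. However, two of your steps are wrong as written. First, you propose to ``extract from the proof of \Cref{projnormCB} the control $|T|\geq d-2n-1$.'' That proof gives no lower bound whatsoever on the size of the subset it produces: the subset comes out of \Cref{cor:rat-equiv-implies-cb} as the set of points indexed by the ``dominant'' indices $I$, and nothing in that construction prevents it from being small a priori. If you tried to carry out this step you would fail. The bound you need is instead the first clause of \Cref{lem:bcd} itself: any set of $m$ distinct points satisfying $\mathit{CB}(r)$ with $r\geq 1$ has $m\geq r+2$, which with $r=d-2n-2$ gives $|T|\geq d-2n$ --- even slightly better than what the corollary asserts, and enough to conclude once $T$ is known to lie on a line.

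Second, you misquote the collinearity threshold in \Cref{lem:bcd}: the lemma requires $m\leq 2r+1$, not $m\leq 2r+3$. With the correct threshold and $r=d-2n-2$, the condition $|T|\leq d\leq 2r+1=2d-4n-3$ is equivalent to $d\geq 4n+3$, so the hypothesis of the corollary is exactly calibrated and your claim that $d\geq 4n+1$ would suffice is not justified by the cited lemma. (A minor further point: \Cref{lem:bcd} is stated for distinct points, not for zero-dimensional subschemes with multiplicity; this is harmless here since $T$ is a set of distinct points among the $p_i$, and a lower bound on the number of distinct points on the line a fortiori bounds the count with multiplicity.) Both issues are repairable without new ideas, but as written the proposal leans on a size estimate that is not where you say it is.
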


\Cref{projnorm} implies \Cref{thm:main-introduction} for closed points of degree $d$: taking $p_1, \dots, p_d$ to be the Galois conjugates of a degree $d$ point, the transitivity of the Galois action ensures that any line that contains more than half of the points $p_1, \dots, p_d$ contains all of them. In the case $k>1$, it is more difficult to pass from the Cayley--Bacharach condition on subsets to the desired existence of a degree $k$ curve covering \emph{all} points, and the problem is connected to some currently open problems \cite[Question 1.1]{picoco2023geometry}. We are nevertheless able to sidestep this issue by directly analyzing, in Section \ref{sec:CB-bootstrap}, the Cayley--Bacharach condition on a subset in the presence of a transitive group action. \\

To discuss the proof of \Cref{projnormCB}, we consider $\pi: \mathcal{X}_{n,d}\longrightarrow B_{n,d}$, the universal family of hypersurfaces of degree $d$ and dimension $n$, where
\[\mathcal{X}_{n, d} \coloneqq \left\{(x,f) : f(x)=0\right\}\subset \mathbb{P}_\mathbb{C}^{n+1}\times B_{n,d},\]
and $B_{n,d} = \mathbb{P}H^0(\mathbb{P}_\mathbb{C}^{n+1},\mathcal{O}(d)).$
The variety $X_{n,d}$ is the generic fiber of $\pi$, and the degree $kd$ points on $X_{n,d}$ can be interpreted as rational multisections of $\pi$. \Cref{projnormCB} is proved by using a well-known theorem of Mumford and Ro\u \i tman (\Cref{thm:mumford}) on rational equivalence of zero-cycles. This theorem implies the following. Suppose  $X$ is a smooth hypersurface of degree $d$ and $Y\subset \text{Sym}^dX$ is an irreducible subvariety that parametrizes effective zero-cycles $\alpha_y$ which are rationally equivalent to one another and which ``span $X$'': a generic $x$ in $X$ lies in the support of $\alpha_y$ for some $y\in Y$. Then for generic $y\in Y$, a subset of the support of $\alpha_y$ satisfies $\mathit{CB}(d-n-2)$.\\

We seek to apply this statement to zero-cycles rationally equivalent to $$k\cdot c_1(\mathcal{O}_{\mathcal{X}_{n,d,b}}(1))^{n},$$ for $b\in B_{n,d}$ generic, that arise from a rational multisection of $\pi$. However, to apply this result we need a family of zero-cycles rationally equivalent to one another which ``spans" $\mathcal{X}_{n,d,b}$. This is where the work of Riedl and Yang from \cite{RY22} enters the picture. It allows to show that if for generic $b\in B_{n,d}$ there is a degree $kd$ effective zero-cycle on $\mathcal{X}_{n,d,b}$ whose support does not contain a subset satisfying $\mathit{CB}(d-2n-2)$, then, for $m>n$ and $b'\in B_{m,d}$ generic, there is a family of such cycles on $\mathcal{X}_{m,d,b'}$. This family gets larger with $m$ and eventually must span $\mathcal{X}_{m,d,b'}$, thereby contradicting the Mumford--Ro\u \i tman theorem.\\

Riedl and Yang study loci $Z_{n,d}\subset \mathcal{X}_{n,d}(\C)$ in the universal hypersurface of degree $d$ and dimension $n$. They suppose that these loci satisfy a natural compatibility condition, namely, if $(x,X)\in Z_{n,d}$ can be realized as a linear slice of $(x',X')\in \mathcal{X}_{n',d}(\C)$, then $(x',X')\in Z_{n',d}$. Of course, many loci of interest satisfy this compatibility condition. For instance $Z_{n,d}$ could consist of pairs $(x,X)$ such that $x$ lies on a rational curve in $X$.\\

What Riedl and Yang prove is that if $Z_{n,d}\subset \mathcal{X}_{n,d}(\C)$ has positive codimension, then $\codim Z_{n-r,d}\geq r+1$ for $r\geq 0$, thereby allowing to show in some cases that $Z_{n-r,d}$ is empty when $(n-r)$ is small. In short, this growth of codimension is due to the fact that a given hypersurface of degree $d$ and dimension $(n-r)$ can be realized in many ways as a linear slice of a hypersurface of degree $d$ and dimension $n$. \Cref{projnormCB} is shown by choosing an appropriate locus $Z_{n,d}$. The key fact that $Z_{n,d}\subset \mathcal{X}_{n,d}(\C)$ has positive codimension is proven using the theorem of Mumford and Ro\u \i tman.\\

Finally, to obtain the result for higher-dimensional cycles, we reduce to the zero-dimensional case. To solve the question for curves of degree $3d$, for example, we need to prove that if a general hyperplane section of a projective curve $Z$ lies on a degree $3$ curve, then $Z$ itself lies on a cubic surface. Questions of this type appear in the literature under the names ``lifting problem'' and ``generalized Laudal's lemma''. We appeal to a result of this type due to Chiantini and Ciliberto \cite{chi-cil}. Some care is needed to account for the fact that curves produced by \Cref{thm:main-introduction} (for zero-cycles) may not be geometrically irreducible. We deal with this technicality in \Cref{sec:cycles}, which completes the proof of \Cref{thm:main-introduction}. \\ 

 The use we make of the Cayley--Bacharach conditions and their relation to the Mumford--Ro\u \i tman theorem is heavily inspired by the recent literature on measures of irrationality of hypersurfaces (see \cite{bcd2014gonality,BDPELU}). In that setting, sufficient positivity of the canonical bundle ensures that a hypersurface of large degree does not admit a generically finite rational map of low degree to projective space. Indeed, fibers of such a map would satisfy a Cayley--Bacharach condition with respect to the canonical bundle, thereby ensuring that they lie on a line.

\subsection*{Acknowledgments}
This project grew out of the workshop \emph{Degree $d$ points on algebraic surfaces} hosted by the \emph{American Institute of Mathematics}. We would like to warmly thank AIM, its staff, as well as the organizers of the event. We would also like to thank Shamil Asgarli for suggesting this problem as well as all the participants who discussed this problem with us. We thank Qixiao Ma for suggesting that an earlier version of our main result might generalize to higher-dimensional cycles. Finally, we are indebted to Jake Levinson for pointing us to some references on Cayley--Bacharach conditions. The third-named author is funded by the Serrapilheira Institute, the FAPERJ, and the CNPq.

\section{Setup and notation}

For the rest of the paper, we will use the same notation as in the introduction: \begin{itemize}
\item $B_{n,d}$: the parameter space $\PP H^0(\PP^{n+1}, \calO(d))$ for degree $d$ hypersurfaces,
\item $\pi: \calX_{n,d} \longrightarrow B_{n,d}$: the universal family of hypersurfaces,
\item $\mathcal{X}_{n,d,b}$: the fiber of $\pi$ over $b\in B_{n,d}$,
\item $X_{n,d} \to \Spec \mathbb{C}(B_{n,d})$: the generic fiber of $\pi$.
\end{itemize}
We will drop the indices $n$ and $d$ if they are clear from the context.\\

In this paper, a \emph{(sub)variety} is a (sub)scheme of finite type over a field that is reduced but not necessarily closed or irreducible.\\

A \emph{rational multisection of degree $e$} of the family $\pi$ is any of the following equivalent notions:
\begin{itemize}
\item a $K=\mathbb{C}(B)$-point of $\text{Sym}^e_B X$,
\item a rational section of $\text{Sym}^e_B\mathcal{X}\longrightarrow B$,
\item an effective cycle $\mathcal{Y}=\sum_{i=1}^l a_iY_i\in Z^n(\mathcal{X})$, such that $Y_i$ is prime, $\deg(Y_i/B)>0$ for all $i$, and $\sum_{i=1}^l a_i\deg(Y_i/B)=e$.\\
\end{itemize}

Finally, a well-known Chow group calculation shows that the index of $X_{n,d}$ is equal to $d$. We give a proof here for completeness. The key point is that the total space $\calX_{n,d}$ is a projective bundle over a projective space, making it possible to calculate $\textup{CH}^\bullet(\calX)$.

\begin{lemma}\label{lemma:chow-group-calculation}
Suppose $\calY$ is a rational multisection of $\calX_{n,d}/B_{n,d}$ of degree $e$ viewed as a cycle on $\calX_{n,d}$. For a general $b \in B$, let $\calY_b$ denote the fiber of $\calY$ over $b$, viewed as an effective zero-cycle of degree $e$ in $\calX_{n,d,b}$. Then $d|e$ and 
\[\mathcal{Y}_b=\frac{e}{d} c_1(\calO_{\calX_{n,d,b}}(1))^{n}\in \textup{CH}_0(\mathcal{X}_{n,d,b}).\]
\end{lemma}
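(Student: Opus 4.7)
The plan is to compute the relevant part of $\textup{CH}^\bullet(\calX_{n,d})$ using the fact that the projection $p\colon \calX_{n,d} \to \PP^{n+1}$ onto the first factor of $\PP^{n+1} \times B_{n,d}$ is a projective bundle. Indeed, the fiber of $p$ over a point $x \in \PP^{n+1}$ is the hyperplane in $B_{n,d}$ consisting of polynomials that vanish at $x$, and these fibers assemble into the bundle $\calX_{n,d} \cong \PP(V)$, where $V$ is the kernel of the evaluation map of bundles on $\PP^{n+1}$,
\[ 0 \to V \to H^0(\PP^{n+1}, \calO(d)) \otimes \calO_{\PP^{n+1}} \to \calO_{\PP^{n+1}}(d) \to 0. \]
The embedding $\PP(V) \hookrightarrow \PP^{n+1} \times \PP H^0(\PP^{n+1}, \calO(d))$ identifies the tautological class $c_1(\calO_{\PP(V)}(1))$ on $\calX$ with the restriction $L|_\calX$ of the hyperplane class $L$ of $B_{n,d}$.

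Let $H$ denote the pulled-back hyperplane class from $\PP^{n+1}$. By the projective bundle formula, $\textup{CH}^\bullet(\calX)$ is a free $\textup{CH}^\bullet(\PP^{n+1})$-module with basis $1, L, \ldots, L^{r-1}$, where $r := \dim H^0(\PP^{n+1}, \calO(d)) - 1$, subject to a single relation in codimension $r$. Since $r$ is much larger than $n$, this relation places no constraint in codimension $n$, and every class in $\textup{CH}^n(\calX)$ admits a unique expression of the form $\sum_{i+j=n} a_{ij} H^i L^j$. In particular, we may write $[\calY] = \sum_{i+j = n} a_{ij} H^i L^j$ for uniquely determined integers $a_{ij}$.

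It remains to restrict to a fiber $\calX_b$ for general $b \in B_{n,d}$. Since $\calY$ is a rational multisection, for general $b$ the scheme-theoretic fiber $\calY_b$ coincides with the 0-cycle obtained by flat pullback of $[\calY]$ along $\calX_b \hookrightarrow \calX$. Under this pullback, $H$ maps to $c_1(\calO_{\calX_b}(1))$, whereas $L$ maps to $0$, because $L = \pi^*\calO_B(1)$ and the composition $\calX_b \hookrightarrow \calX \xrightarrow{\pi} B$ factors through the point $b$. Thus $[\calY_b] = a_{n,0} \cdot c_1(\calO_{\calX_b}(1))^n$ in $\textup{CH}_0(\calX_b)$. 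Comparing degrees, and using that $\calX_b$ is a degree $d$ hypersurface in $\PP^{n+1}$ so that $\deg c_1(\calO_{\calX_b}(1))^n = d$, yields $e = a_{n,0}\cdot d$, whence $d \mid e$ and $a_{n,0} = e/d$. I do not anticipate any real difficulty: once the projective bundle description of $\calX_{n,d}$ is in place, the argument reduces to mechanical Chow-class bookkeeping, the only mildly delicate point being the identification of $L|_\calX$ with the tautological class of $\PP(V)$.
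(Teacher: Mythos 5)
Your argument is essentially identical to the paper's: both use that the projection $\calX_{n,d}\to\PP^{n+1}$ onto the first factor is a projective bundle, so that $\textup{CH}^\bullet(\calX)$ is generated by the hyperplane class from $\PP^{n+1}$ together with the class $L$ pulled back from $B$, and that $L$ dies upon restriction to a fiber $\calX_b$, leaving $[\calY_b]$ a multiple of $c_1(\calO_{\calX_b}(1))^n$ whose coefficient is pinned down by the degree count $\deg c_1(\calO_{\calX_b}(1))^n=d$. Your version is correct (and you rightly use the exponent $n$ for a zero-cycle on the $n$-dimensional fiber, where the printed statement's $n-1$ is a typo), so there is nothing further to add.
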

\begin{proof}
The projection $\pi':\mathcal{X}\longrightarrow \mathbb{P}^{n+1}$ of $\mathcal{X}$ onto the first factor  is a projective bundle and thus the Chow group of $\mathcal{X}$ is generated as a ring by ${\pi'}^* c_1(\mathcal{O}_{\mathbb{P}^{n+1}}(1))$ and $\pi^*\textup{Pic}(B)$. Clearly cycles in the ideal $\pi^*\text{Pic}(B)\cdot \text{CH}^\bullet(\mathcal{X})$ pullback to zero under the inclusion $\iota_b: \mathcal{X}_b\longrightarrow \mathcal{X}$. Thus, given a subvariety $\mathcal{Y}\subset \mathcal{X}$ which is of relative dimension zero over $B$, the pullback $\calY_b=\iota_b^*(\calY)$ is a multiple of $$\qquad\qquad \iota_b^*\left(\pi'^*c_1(\mathcal{O}_{\mathbb{P}^{n+1}}(1))^{n}\right)=\iota_b^*\left(\pi'^*c_1(\mathcal{O}_{\mathbb{P}^{n+1}}(1))\right)^{n}=c_1(\calO_{\calX_{b}}(1))^{n}.\qquad\qquad \qedhere$$  
\end{proof}

\section{The Mumford--Ro\u \i tman  theorem and Cayley--Bacharach conditions}\label{sec:Mumford+CB}

In this section we present the relation between the Mumford--Ro\u \i tman  theorem on rational equivalence of zero-cycles and Cayley--Bacharach conditions. To formulate the Mumford--Ro\u \i tman  theorem, let us recall that given smooth projective varieties $X$ and $Y$ and a morphism $\phi: Y\longrightarrow \textup{Sym}^d X$, Mumford defines a map
\begin{align*}\text{Tr}(\phi): H^0(X,\Omega^j)&\longrightarrow H^0(Y,\Omega^j)\\
\omega \ \ \ \ &\longmapsto  \ \phi^*(\omega^{(d)}),\end{align*}
which to a holomorphic form $\omega\in H^0(X,\Omega^j)$ associates the pullback by $\phi$ of the form $\sum_{i=1}^d\text{pr}_i^*(\omega)\in H^0(X^d,\Omega^j)$ viewed as a holomorphic form $\omega^{(d)}$ on $\textup{Sym}^d X$. Of course, $\textup{Sym}^d X$ being a singular variety, some care must be taken to define this pullback, but it behaves as expected on the smooth locus.
\begin{theorem}[{Mumford--Ro\u \i tman \cite{mumford1968rational,roitman1980rational}}]
Let $X$ be a smooth projective variety and $Y$ be a smooth variety. Consider a morphism $\phi: Y\longrightarrow \textup{Sym}^d X$ such that the class $\phi(y)\in \textup{CH}_0(X)$ is independent of $y\in Y$. Then the map 
$\textup{Tr}(\phi): H^0(X,\Omega^j)\longrightarrow H^0(Y,\Omega^j)$
vanishes for all $j>0$.
\label{thm:mumford}
\end{theorem}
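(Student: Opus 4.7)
The plan is to reduce Theorem \ref{thm:mumford} to the vanishing $H^0(\PP^1, \Omega^j) = 0$ for $j > 0$, by exploiting the fact that rational equivalence of zero-cycles on $X$ is generated by divisors of rational functions on $\PP^1$. Since $\PP^1$ carries no nonzero holomorphic forms of positive degree, any differential form that ``factors through'' such $\PP^1$-families must vanish.

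Two preliminary reductions simplify the setup. First, to check that $\Tr(\phi)(\omega) \in H^0(Y, \Omega^j)$ vanishes, it suffices to verify that its pullback vanishes along a spanning family of smooth $j$-dimensional subvarieties $Y' \subset Y$; since the constancy of the class $\phi(y) \in \textup{CH}_0(X)$ is preserved upon composing $\phi$ with any inclusion $Y' \hookrightarrow Y$, one may reduce to the case $\dim Y = j$ (noting that the statement is trivial when $j > \dim Y$). Second, pass to a Galois cover $\tilde Y \to Y$ resolving the symmetric group monodromy of $\phi$ by taking a resolution of $Y \times_{\Sym^d X} X^d$; the morphism $\phi$ lifts to a tuple $(\tilde\phi_1, \dots, \tilde\phi_d)\colon \tilde Y \to X^d$, and the pullback of $\Tr(\phi)(\omega)$ to $\tilde Y$ becomes $\sum_{i=1}^d \tilde\phi_i^* \omega$. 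It thus suffices to show this sum vanishes in $H^0(\tilde Y, \Omega^j)$.

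The heart of the argument is the geometric realization of the rational-equivalence hypothesis. Fix a base point $\tilde y_0 \in \tilde Y$; for every $\tilde y \in \tilde Y$ there exist morphisms $f_{k, \tilde y}\colon \PP^1 \to X$ (with $k$ ranging over a finite index set) such that $\sum_i \tilde\phi_i(\tilde y) - \sum_i \tilde\phi_i(\tilde y_0) = \sum_k (f_{k,\tilde y}(0) - f_{k,\tilde y}(\infty))$ as cycles on $X$. Using the noetherianity of the Hilbert and Chow schemes parametrizing such witnessing data of bounded degree, one stratifies $\tilde Y$ and, on each stratum, produces a generically finite cover $W \to \tilde Y$ together with morphisms $W \to X \times \PP^1$ whose fibers encode the rational equivalence algebraically. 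The form $\sum_i \tilde\phi_i^* \omega$ pulled back to $W$ is then expressible as a combination of pullbacks along maps factoring through $\PP^1$-fibers, which vanish because $\Omega^j_{\PP^1} = 0$; pushing back down to $\tilde Y$ yields $\sum_i \tilde\phi_i^* \omega = 0$.

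The main obstacle is the globalization step: a priori the witnessing curves $f_{k,\tilde y}$ depend on $\tilde y$ only set-theoretically, and assembling them into an algebraic family requires a careful stratification of $\tilde Y$ by Hilbert-scheme components together with base change to a generically finite cover. Once this is in place, the concluding computation of the trace form along the resulting correspondence is a bookkeeping exercise standard in the theory of algebraic cycles, as carried out originally by Mumford for surfaces and extended by Ro\u\i tman.
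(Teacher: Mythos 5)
The paper does not prove \Cref{thm:mumford}; it is quoted from Mumford and Ro\u\i tman, so there is no internal argument to compare yours against. Measured against the standard proof (Mumford's original argument, Ro\u\i tman's extension, or Voisin's exposition), your outline has the right overall shape --- lift $\phi$ to $X^d$ over a cover resolving the monodromy, spread the witnesses of rational equivalence into algebraic families over a generically finite cover, and conclude from the absence of positive-degree holomorphic forms on $\PP^1$ --- but the step in which you geometrically realize the rational-equivalence hypothesis is wrong as stated, and that step is the heart of the theorem.

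You claim that $\sum_i\tilde\phi_i(\tilde y)-\sum_i\tilde\phi_i(\tilde y_0)=\sum_k\bigl(f_{k,\tilde y}(0)-f_{k,\tilde y}(\infty)\bigr)$ for morphisms $f_{k,\tilde y}\colon\PP^1\to X$. Rational equivalence of zero-cycles is not generated by rational curves in $X$: by definition it is generated by divisors $\operatorname{div}(f_k)$ of rational functions $f_k$ on curves $C_k\subset X$ of \emph{arbitrary} genus (equivalently, by maps $\tilde C_k\to\PP^1$ from normalizations of such curves). On a K3 or general-type surface there are few rational curves, yet $\operatorname{CH}_0$ carries many nontrivial rational equivalences, so your identity cannot hold in general. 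The correct substitutes are either (a) families of pairs consisting of a curve in $X$ and a rational function on it, in which case $\PP^1$ enters as the \emph{target} of $f_k$ and the vanishing $H^0(\PP^1,\Omega^1)=0$ is exploited through the trace of $g^*\omega$ along $\mathcal{C}\to W\times\PP^1$ --- and controlling the poles of that trace form is where Mumford's ``induced differential'' formalism does real work --- or (b) Ro\u\i tman's lemma that $z\sim z'$ if and only if $z+w$ and $z'+w$ are joined by a chain of rational curves in $\Sym^{M}X$ for some auxiliary effective cycle $w$ and $M\gg 0$: rational curves in the symmetric power, not in $X$. A secondary gap: the degrees of the witnessing curves and functions are not bounded a priori, so the stratification consists of \emph{countably} many loci and one needs the uncountability of $\C$ (a Baire-type argument) to find a stratum dominating $\tilde Y$; ``noetherianity'' of a single Hilbert scheme does not suffice. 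Your preliminary reductions (restricting to $j$-dimensional subvarieties and resolving the $S_d$-monodromy) are fine.
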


Recall that a set $\{x_1,\dots,x_m\}$ of distinct points in a smooth projective variety $X$ is said to satisfy the \emph{Cayley--Bacharach condition} with respect to an effective line bundle $\calL$ if any section of $\calL$ vanishing on $(m-1)$ of the $x_i$'s vanishes on all of them. The Mumford--Ro\u \i tman  theorem implies that given a family of rationally equivalent effective zero-cycles on $X$ whose supports cover $X$, the generic member has a subset of its support satisfying the Cayley--Bacharach condition with respect to $K_X$.

\begin{corollary}\label{corCB}
Let $X$ be a smooth projective variety, $Y$ a variety, and consider a morphism $\phi: Y\longrightarrow \textup{Sym}^d X$ such that:
\begin{enumerate}
\item the class $\phi(y)\in \textup{CH}_0(X)$ is independent of $y\in Y$,
\item for $x\in X$ generic, there is a $y\in Y$ such that $x$ belongs to the support of $\phi(y)$.
\end{enumerate}
Then for $y\in Y$ generic, there is a subset of the support of $\phi(y)$ (ignoring multiplicity) satisfying the Cayley--Bacharach condition for $K_X$.
\label{cor:rat-equiv-implies-cb}
\end{corollary}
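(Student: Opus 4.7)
The plan is to apply \Cref{thm:mumford} with $j = n := \dim X$ and read off the Cayley--Bacharach relation from the resulting trace identity, evaluated pointwise at a generic $y_0 \in Y$. After replacing $Y$ by a smooth dense open subset and passing to an étale base change if necessary, $\phi$ factors locally as a tuple $\widetilde{\phi} = (\sigma_1, \ldots, \sigma_d) : Y^\circ \to X^d$; grouping repeated sections, I write $\phi(y) = \sum m_i [\sigma_i(y)]$ with the $\sigma_i$ pairwise distinct as morphisms and $\sum m_i = d$. Let $D$ be the set of indices for which $\sigma_i$ is dominant; condition (2) forces $D \neq \emptyset$, since a finite union of images of non-dominant $\sigma_i$'s cannot cover $X$. \Cref{thm:mumford} then yields
\[\sum_i m_i\, \sigma_i^* \omega \;=\; 0 \quad \text{in } H^0(Y^\circ, \Omega^n)\]
for every $\omega \in H^0(X, K_X)$, since $\phi^*(\omega^{(d)})$ computes exactly this sum on the smooth locus.

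Next I would evaluate this identity at a generic $y_0 \in Y^\circ$. After removing a further proper closed subset, I may assume that (a) $d\sigma_i|_{y_0}$ is surjective for $i \in D$ (by generic smoothness), (b) $d\sigma_i|_{y_0}$ has rank $<n$ for $i \notin D$ (as the image of $\sigma_i$ lies in a proper subvariety of $X$), and (c) the points $\{\sigma_i(y_0) : i \in D\}$ are pairwise distinct (as the $\sigma_i$ are distinct morphisms). Under (b), each non-dominant term vanishes pointwise, because $\wedge^n (d\sigma_i)^*$ annihilates $n$-covectors whenever $d\sigma_i$ has rank $<n$. The identity therefore collapses to
\[\sum_{i \in D} m_i \, \wedge^n (d\sigma_i)^*_{y_0}\!\bigl(\omega_{\sigma_i(y_0)}\bigr) \;=\; 0 \quad \text{in } \wedge^n T^*_{y_0} Y.\]

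Finally, let $T := \{\sigma_i(y_0) : i \in D\}$, which is a subset of the support of $\phi(y_0)$ of cardinality $|D|$ by (c). Given any $\omega \in H^0(X, K_X)$ vanishing at all points of $T$ except one point $p = \sigma_j(y_0)$ with $j \in D$, every term in the displayed sum vanishes except the $i = j$ one, leaving $m_j \wedge^n (d\sigma_j)^*_{y_0}(\omega_p) = 0$. Surjectivity of $d\sigma_j|_{y_0}$ from (a) makes $\wedge^n (d\sigma_j)^*_{y_0}$ injective on $\wedge^n T^*_p X$, and $m_j > 0$, so $\omega_p = 0$. This proves that $T$ satisfies the Cayley--Bacharach condition for $K_X$.

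\emph{Main obstacle.} The argument is essentially formal once the local factorization and the genericity conditions (a)--(c) are in place. The delicate step is therefore the étale-local factorization of $\phi$ as a tuple $(\sigma_1, \ldots, \sigma_d)$ and the simultaneous enforcement of (a)--(c); this is standard since $X^d \to \mathrm{Sym}^d X$ is étale away from the big diagonal, and (a)--(c) each fail only on a proper closed subset of $Y^\circ$.
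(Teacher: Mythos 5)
Your proof is correct and takes essentially the same route as the paper's: the paper realizes your \'etale-local factorization concretely via the fiber product $\tilde Y = Y\times_{\Sym^d X}X^d$ (your multiplicities $m_i$ appear there as the blocks $B_k$ of coinciding projections $\eta_i$), and it likewise discards the non-dominant components by the rank of the differential on top forms and concludes via injectivity of the pullback at dominant indices together with characteristic zero. No substantive differences.
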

\begin{proof}
We may assume without loss of generality that $Y$ is irreducible. Consider the fiber product
\[
\begin{tikzcd}
\tilde{Y} \ar[d,"\varpi"] \ar[r,"\tilde{\phi}"] & X^d\ar[d,"\pi"]\\
Y \ar[r,"\phi"]& \text{Sym}^d X,
\end{tikzcd}
\]
where $\pi$ is the quotient map. Let $\mathrm{pr}_i:X^d\longrightarrow X$ denote the $i$-th projection map, and write $\eta_i\coloneqq \mathrm{pr}_i\circ \tilde\phi:\tilde{Y}\to X$. Hence, for $\tilde y\in \tilde Y$ we may write $\tilde \phi(\tilde y)=(\eta_1(\tilde y),\dots,\eta_d(\tilde y))$.\\

Since $\{y\in Y: x\in \textup{Supp}(\phi(y))\}\subset Y$ is non-empty for $x\in X$ generic, by taking a component of $\tilde Y$ and passing to an open subset, there is a smooth irreducible variety $\tilde Y'\subset \tilde Y$ such that $\tilde\phi(\tilde Y')\subset X^d$ dominates some factor. Note that $\varpi|_{\tilde Y '}: \tilde Y' \longrightarrow Y$ is dominant, so the image of a general point $y \in \tilde Y '$ is general in $Y$. Let $\emptyset\neq I\subset\{1,\cdots, d\}$ consist of the integers $i$ such that $\eta_i:\tilde{Y}'\longrightarrow X$ is dominant. Then for $y\in \tilde{Y}'$ generic and $x_i=\eta_i(y)$, the induced map $$\eta_{i*}:T_{\tilde Y',y}\longrightarrow T_{X,x_i}$$
is surjective if and only if $i\in I$. As a result, the induced map
$$\eta_i^*:\Omega^n_{X,x_i}\longrightarrow \Omega^n_{\tilde{Y}',y}$$ is injective if $i\in I$ and zero if $i\notin I$. Shrinking $\tilde Y'$ further, we can assume the above holds for all $y\in \tilde Y'$.\\

We now prove that the subset $\{\eta_i(y):i\in I\}\subset X$ (without multiplicities) satisfies the Cayley--Bacharach condition for $K_X$ for $y\in Y$ generic. By passing to an open dense subset of $\tilde Y'$, we may assume that for each pair $\{i,j\}\subset I$, either
\begin{enumerate}
    \item $\eta_i|_{\tilde Y'}=\eta_j|_{\tilde Y'}$, or
    \item $\eta_i(y)\neq \eta_j(y)$ for all $y\in \tilde{Y}'$.
\end{enumerate}
Define a partition $I=B_1\sqcup \dots \sqcup B_l$ into nonempty subsets $B_k$ such that case (1) happens if and only if $i$ and $j$ belong to the same subset $B_k$.\\ 

Now consider any $y\in \tilde Y'$, and let $x_i\coloneqq \eta_i(y)$ for $1\leq i\leq d$. For every subset $B_k$ choose an element $i \in B_k$ and write $z_k=x_i$ and $\psi_k=\eta_i$; this is independent of the choice of $i \in B_k$ by definition. We have the equality of sets $\{z_1,\dots,z_l\}=\{\eta_i(y):i\in I\}$. Consider a form $\omega\in H^0(X,K_X)$. Then the pullback of the form $\sum_{i=1}^d\text{pr}_i(\omega)$ to $\tilde Y'$ vanishes by Theorem~\ref{thm:mumford}. Evaluating this pullback at $y\in \tilde{Y}'$ gives 
$$\sum_{i\in I}\eta_i^*\omega(x_i)=\sum_{i=1}^d\eta_i^*\omega(x_i)=0,$$
and thus
$$\sum_{k=1}^l \left|B_k\right|\psi_k^*\omega(z_k)=0.$$

Assume now that $\omega$ vanishes at all but one of the $z_k$, say, $z_1,\dots,z_{l-1}$. Then it follows that $\left|B_l\right| \psi_l^*\omega(z_l)=0$. Since our ground field is of characteristic zero and $\psi_l^*$ is injective, we have $\omega(z_l)=0$. This proves that $\{z_1,\dots,z_l\}$ satisfies the Cayley--Bacharach condition for $K_X$, as claimed.
\end{proof}
Since we study hypersurfaces in projective space, we will be using the Cayley--Bacharach condition with respect to $\calO(r)$.
\begin{notation}
    We say a collection of points in $\PP^n$ satisfies $\mathit{CB}(r)$ if it satisfies the Cayley--Bacharach conditions with respect to $\calO(r)$.
\end{notation}

We study geometric consequences of condition $\mathit{CB}(r)$ in Section \ref{sec:CB-bootstrap}, which will then be applied in combination with \Cref{cor:rat-equiv-implies-cb}. Here we record two useful results of this type which are already present in the literature.
\begin{lemma}[{Lemma 2.4 of \cite{bcd2014gonality}}]
Let $n\geq 2$ and $Z=\{x_1,\cdots, x_m\}\subset \mathbb{P}^n$ be a collection of distinct points satisfying $\mathit{CB}(r)$, where $r\geq 1$. Then $m\geq r+2,$ and if $m\leq 2r+1,$ then $Z$ is contained in a line.
\label{lem:bcd}
\end{lemma}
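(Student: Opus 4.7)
My plan is to prove the two parts separately, with the first feeding into the second.

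For the bound $m \geq r+2$, I would argue by contradiction. Assume $m \leq r+1$ and fix any $x_m \in Z$. For each $i < m$ I pick a hyperplane $H_i$ through $x_i$ missing $x_m$, which exists since $x_i \neq x_m$; after multiplying $H_1 \cdots H_{m-1}$ by additional hyperplanes missing $x_m$ to raise its degree to exactly $r$, I obtain a degree $r$ hypersurface containing $Z \setminus \{x_m\}$ but missing $x_m$, contradicting $\mathit{CB}(r)$.

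For the second part, I assume for contradiction that $Z$ is not on a line and $m \leq 2r+1$; the case $r=1$ is immediate, so assume $r \geq 2$. The first step is a line bound: for any line $L \subset \mathbb{P}^n$ meeting $Z$ in $t$ points, the residual $W \coloneqq Z \setminus L$ satisfies $\mathit{CB}(r-1)$. Indeed, any degree $r-1$ hypersurface $C$ containing $W \setminus \{w\}$ and missing $w$ can be combined with a hyperplane $H \supset L$ missing $w$ — which is $L$ itself when $n=2$, or a generic member of the pencil of hyperplanes through $L$ when $n \geq 3$ — to produce a degree $r$ hypersurface through $Z \setminus \{w\}$ missing $w$, contradicting $\mathit{CB}(r)$ for $Z$. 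Applying the first part of the lemma to $W$ (nonempty because $Z \not\subset L$) gives $m - t \geq r+1$, whence $t \leq m-r-1 \leq r$. Next, I fix any $x_m \in Z$ and partition $Z \setminus \{x_m\}$ into classes $G_1, \dots, G_k$ according to the line through $x_m$ on which each point lies; the line bound applied to each $L_\alpha$ yields $|G_\alpha| \leq r-1$, and $k \geq 2$ since $Z$ is not on a line.

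The final step is to cover $Z \setminus \{x_m\}$ by at most $r$ hyperplanes missing $x_m$. I would pair up points from distinct classes greedily, leaving the rest as singletons; by the standard formula for the maximum matching in the complete multipartite graph with part sizes $|G_\alpha|$, the minimum number of pairs plus singletons equals $\max(\max_\alpha |G_\alpha|, \lceil (m-1)/2 \rceil)$, which is at most $r$ under the hypotheses $|G_\alpha| \leq r-1$ and $m-1 \leq 2r$. Any two points in distinct classes lie on a line missing $x_m$, and such a line extends to a hyperplane missing $x_m$; singletons lie in hyperplanes missing $x_m$ as well. The product of these hyperplanes, boosted to degree $r$ by multiplying by a further hyperplane avoiding $x_m$, contradicts $\mathit{CB}(r)$.

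The main point of care is this final covering step: one must verify both that the greedy matching attains the count $\max(\max_\alpha |G_\alpha|, \lceil (m-1)/2 \rceil)$ and that the resulting hyperplanes genuinely miss $x_m$. The latter reduces to the elementary observation that two points from distinct classes span a line not through $x_m$, which then extends to a hyperplane not through $x_m$; the former is a short optimization on the multipartite graph, where the upper bound is achieved by balancing within small groups and absorbing the majority group against the rest in the extremal case.
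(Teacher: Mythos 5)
The paper does not prove this lemma; it imports it verbatim as Lemma~2.4 of \cite{bcd2014gonality}, so there is no in-house argument to compare against. Your blind proof is correct and self-contained, and it runs along the same circle of ideas as the original (violating $\mathit{CB}(r)$ with unions of hyperplanes, plus a residuation step that trades a line for one degree of the linear system). The three places that need care all check out: (i) the residuation step is sound because $w\notin L$ guarantees a hyperplane $H\supset L$ with $w\notin H$, and $H\cup C$ then violates $\mathit{CB}(r)$ for $Z$; (ii) the bound $|G_\alpha|\leq r-1$ follows since $L_\alpha\cap Z=G_\alpha\cup\{x_m\}$ has at most $r$ points by the line bound; (iii) the covering count is exactly $\max\bigl(\max_\alpha|G_\alpha|,\ \lceil (m-1)/2\rceil\bigr)$, which is the standard complement of the maximum matching $\min\bigl(\lfloor (m-1)/2\rfloor,\ (m-1)-\max_\alpha|G_\alpha|\bigr)$ in a complete multipartite graph, achieved by always pairing the two largest remaining classes; under $m-1\leq 2r$ and $\max_\alpha|G_\alpha|\leq r-1$ this is at most $r$. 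Two points you dispatch quickly but correctly: the case $r=1$ forces $m=3$ and is handled by the line through two of the points, and a hyperplane through a pair of points from distinct classes misses $x_m$ precisely because those two points are not collinear with $x_m$. No gaps.
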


\begin{lemma}[Lemma 2.5 of \cite{lopez1995curves}, $\PP^2$ case]
    Let $Z=\{x_1,\dots,x_m\}\subset \PP^2$ be a set of $m\geq 1$ distinct points that satisfy $\mathit{CB}(r)$, $r\geq 1$. Then $m\geq r+2$ and if $k\geq 1$ is an integer satisfying
    \[ m\leq (k+1)r-(k^2-k-1),\quad r\geq 2k-1,\]
    then $Z$ lies on a reduced curve of degree $k$.
    \label{lopez}
\end{lemma}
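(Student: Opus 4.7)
The plan is to proceed by induction on $k$, with the base case $k = 1$ handled directly by \Cref{lem:bcd}. Indeed, for $k = 1$ the hypotheses read $m \leq 2r + 1$ and $r \geq 1$, yielding a line through $Z$, which is a reduced degree-one curve.

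The central ingredient for the inductive step is a residuation principle for Cayley--Bacharach configurations: given $Z \subset \PP^2$ satisfying $\mathit{CB}(r)$ and a line $L \subset \PP^2$, the residual $Z \setminus (Z \cap L)$ satisfies $\mathit{CB}(r - 1)$. This is because, if $F \in H^0(\PP^2, \O(r-1))$ vanishes on all but one point $p$ of the residual, then the product of $F$ with the defining form of $L$ is a section of $\O(r)$ vanishing on all of $Z$ except possibly $p$; by $\mathit{CB}(r)$ it must vanish at $p$, and since $p \notin L$ we already conclude $F(p) = 0$.

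The main move of the inductive step is to search for a line $L_0$ with $|Z \cap L_0| \geq r - k + 2$. When such an $L_0$ exists, set $Z' = Z \setminus (Z \cap L_0)$ and observe
\[
|Z'| \leq (k+1)r - (k^2 - k - 1) - (r - k + 2) = k(r-1) - \bigl((k-1)^2 - (k-1) - 1\bigr);
\]
together with $r - 1 \geq 2(k-1) - 1$, this means the inductive hypothesis applies to $Z'$ at degree $k - 1$, yielding a reduced degree-$(k-1)$ curve $C'$ containing $Z'$, so that $Z \subset L_0 \cup C'$. If $L_0 \not\subset C'$, this union is a reduced degree-$k$ curve containing $Z$. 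Otherwise $C' = L_0 \cup C''$ with $\deg C'' = k - 2$, and since $Z' \cap L_0 = \emptyset$ we get $Z \subset L_0 \cup C''$, a reduced curve of degree $k - 1$, which can be extended to degree $k$ by adjoining a generic line.

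The hard part will be the remaining regime in which every line meets $Z$ in at most $r - k + 1$ points, so that a single line residuation is not enough. Here the plan is to iterate residuation against conics or higher-degree curves --- trading each drop in $r$ against a matching drop in the inductive degree --- and to combine these reductions with a Hilbert function estimate for $Z$ ruling out the possibility that $Z$ fails to lie on a reduced degree-$k$ curve. The careful combinatorial accounting in this ``no-concentration'' regime, where both inequalities $r \geq 2k - 1$ and $m \leq (k+1)r - (k^2 - k - 1)$ are used in tandem, is the technical heart of the argument, and is where I would expect to spend the most work; the details are executed in \cite[Lemma 2.5]{lopez1995curves}.
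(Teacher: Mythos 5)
The paper does not actually prove this lemma: it is imported verbatim as Lemma~2.5 of \cite{lopez1995curves}, so there is no internal argument to compare yours against. Judged as a self-contained proof, your proposal has a genuine gap, although the pieces you do carry out are correct: the base case $k=1$ via \Cref{lem:bcd}; the residuation principle (multiplying by the linear form of $L$ shows that $Z\setminus(Z\cap L)$ inherits $\mathit{CB}(r-1)$, and in particular is never a single point); and the bookkeeping showing that deleting a line carrying at least $r-k+2$ points of $Z$ places the residual set inside the inductive hypotheses for the pair $(r-1,k-1)$.

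The problem is that the dichotomy you set up is one you do not resolve, and the unresolved half is not a corner case --- it is the main case for every $k\geq 2$. Already for $k=2$, the configurations the lemma is really about are things like the $2r+2$ points cut out on a smooth conic by a curve of degree $r+1$ (these satisfy $\mathit{CB}(r)$ and $2r+2\leq 3r-1$ for $r\geq 3$): no line meets such a $Z$ in more than $2$ points, while your threshold is $r-k+2=r\geq 3$, so the line-residuation step never fires. For this regime you offer only a plan (``iterate residuation against conics or higher-degree curves \ldots\ combine with a Hilbert function estimate'') and then defer the details to \cite[Lemma 2.5]{lopez1995curves} --- that is, to the very statement being proved. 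That remaining analysis, controlling the Hilbert function of a $\mathit{CB}(r)$ configuration of small degree so as to force a low-degree curve through \emph{all} of $Z$, is the actual content of the lemma, and none of it is supplied. So what you have is an honest partial argument plus a citation, not a proof.
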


\section{Cayley--Bacharach bootstrap}\label{sec:CB-bootstrap}

The proof of \Cref{thm:main-introduction} will combine \Cref{cor:rat-equiv-implies-cb} with a method of Riedl and Yang \cite{RY22} to show that for a degree $kd$ point $P$ of $X_{n,d}$ there is a subset of the Galois orbit of $P$ in $X_{n,d}(\C)$ which satisfies $\mathit{CB}(d-2n-2)$. The goal of this section is to explain how this condition, which appears to be rather weak, is in fact sufficient to conclude that $P$ is a transverse intersection of $X_{n,d}$ with a degree $k$ curve, provided that $d \gg n,k$ .\\

Whether a set of points in a projective space satisfies a certain Cayley--Bacharach property is intimately related to the existence of a low-degree curve containing this set. For example, if $2r+1$ or fewer points in projective space satisfy $\mathit{CB}(r)$ then they lie on a line \cite[Lemma 2.4]{bcd2014gonality}. However, this relationship remains conjectural for most ranges of parameters; see \cite[Question 1.1]{picoco2023geometry}, the related \cite[Conjecture 1.2]{levinson2022cayley}, and their partial resolution in \cite{banerjee2024error}. We will prove a new version of this connection in \Cref{lem:high-d-lopez}. By using this result we obtain a small subset of conjugates of $P$ lying on a low-degree curve. We then establish the main result of this section, \Cref{CB-transitive}, which utilizes the Galois action to find a collection of Galois-conjugates of this low-degree curve that cover the whole orbit of $P$ and has total degree $k$.\\ 

The theorems in this section involve a number of parameters: $n,r,d,k$, and various numerical conditions appear as hypotheses. These results will be applied to degree $kd$ points on $X_{n,d}$ when $d\gg k,n$. In this situation $d$ is large, $n$ and $k$ are small, and $r=d-2n-2$, which is less than $d$ but close to it. It is very easy to see that in this regime all of the required inequalities are satisfied.\\

We first prove a simple lemma, which is useful for inductive arguments.

\begin{lemma}
    \label{CB-projections}
Let $S\subset \mathbb{P}^n$, $n\geqslant 3$ be a collection of points and suppose that the image of $S$ under a generic projection $\mathbb{P}^n\dashrightarrow \mathbb{P}^{n-1}$ lies on a degree $k$ curve. Then there is an integer $0\leq k'\leq k$ and a (possibly reducible) curve of degree $k'$ in $\mathbb{P}^n$ containing all but at most $(k-k')(k^2-k')$ points of $S$. 
\end{lemma}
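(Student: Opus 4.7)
The plan is to take two generic cones of degree $\leq k$ containing $S$ and analyze the curve-like part of their intersection, separating ``lifting'' components (which will assemble the desired curve $D$) from ``non-lifting'' ones (whose contributions are controlled by B\'ezout).

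Choose generic points $p_1, p_2 \in \PP^n$ and, using the hypothesis, degree-$\leq k$ curves $C_{p_1}, C_{p_2} \subset \PP^{n-1}$ containing $\pi_{p_1}(S)$ and $\pi_{p_2}(S)$, respectively. Their preimages $Y_i \coloneqq \pi_{p_i}^{-1}(C_{p_i})$ are cones of degree $\leq k$ in $\PP^n$ containing $S$. For $n = 3$ B\'ezout shows that $\Gamma \coloneqq Y_1 \cap Y_2$ is a curve of degree $\leq k^2$ in $\PP^3$ containing $S$; for $n \geq 4$, one either has $|S| \leq k^2$ (in which case the lemma holds trivially with $k' = 0$, since $k^2 \leq k^3$) or the intersection $Y_1 \cap Y_2$ carries a one-dimensional component $\Gamma$ of degree $\leq k^2$ containing $S$.

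Decompose $\Gamma = \Gamma_1 \cup \cdots \cup \Gamma_r$ into irreducible components. Call $\Gamma_i$ \emph{lifting} if $\pi_p(\Gamma_i)$ is an irreducible component of $C_p$ for generic $p \in \PP^n$, and set $D \coloneqq \bigcup_{\text{lifting}} \Gamma_i$, $k' \coloneqq \deg D$. Three observations then give the lemma: (a) for generic $p$ the images $\pi_p(\Gamma_i)$ attached to distinct lifting components are distinct components of $C_p$, so $k' \leq \deg C_p \leq k$; (b) for $s \in S \setminus D$ and generic $p$, the line $\overline{ps}$ does not meet any lifting $\Gamma_i$ (since $s$ lies on none, and the bad locus of $p$'s for each fixed $s$ is a proper subvariety), so $\pi_p(s)$ lies on the residual $C_p^{\mathrm{nl}} \coloneqq C_p \setminus \bigcup_{\text{lifting}} \pi_p(\Gamma_i)$, a curve of degree $k - k'$; (c) applying B\'ezout between $\pi_p(\Gamma \setminus D)$ (of degree $\leq k^2 - k'$) and $C_p^{\mathrm{nl}}$, which share no components by construction, yields $|S \setminus D| \leq (k^2 - k')(k - k')$.

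The main obstacle is formalizing the lifting condition. Since $\pi_p(S)$ does not uniquely determine $C_p$, one must work with a parameter variety $\mathcal{H} \subset \PP^n \times \mathrm{Hilb}$ of pairs $(p, C_p)$, take an irreducible component dominating $\PP^n$, and verify the genericity statements in (a)--(c) on this component. One must also treat $n \geq 4$ with care, possibly replacing the direct two-cone argument by induction on $n$ (using that $\pi_p(S) \subset \PP^{n-1}$ itself satisfies the hypothesis, obtained by projecting $C_p$ once more) in order to bypass the dimension-theoretic subtleties of $Y_1 \cap Y_2$ when the expected intersection dimension is negative.
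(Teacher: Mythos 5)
Your overall strategy---intersect two generic cones to trap $S$ in a curve $\Gamma$ of degree at most $k^2$, then compare $\Gamma$ with the degree-$k$ plane curve and bound the residual intersection by B\'ezout---is the same as the paper's, and your steps (a)--(c) are essentially sound. However, your treatment of the case $n\geq 4$ has a genuine gap. The dichotomy ``either $|S|\leq k^2$ or $Y_1\cap Y_2$ carries a one-dimensional component $\Gamma$ of degree $\leq k^2$ containing $S$'' is not justified: two surfaces meeting in excess dimension can intersect in a curve part together with isolated points, and even when $|S|>k^2$ some points of $S$ may be among the isolated points of $Y_1\cap Y_2$ rather than on its one-dimensional part (refined B\'ezout bounds the total degree of all components, but does not force $S$ into the curve part). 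Points of $S$ lying off $\Gamma$ then escape your count of $|S\setminus D|$ entirely. Your fallback of inducting on $n$ does not obviously close either, since the inductive output is a curve through only \emph{most} of $\pi_p(S)$, which no longer matches the hypothesis needed to lift back to $\PP^n$. The paper's fix is simple and worth knowing: replace the first cone $F_1$ by a generic cone $\tilde F_1$ over it with apex of dimension $n-3$, so that $\tilde F_1$ is a degree-$k$ hypersurface containing $S$ and sharing no component with $F_2$; then $\tilde F_1\cap F_2$ is purely one-dimensional of degree at most $k^2$ and contains all of $S$.

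Second, the ``lifting components'' device, which you correctly identify as the main formalization obstacle, is avoidable. Rather than choosing a curve $C_p$ for every $p$ and working with an algebraic family $\mathcal{H}\subset\PP^n\times\mathrm{Hilb}$, the paper takes a single third generic projection $\pi$, which is regular on the degree-$\leq k^2$ curve, birational of the same degree on each of its components, and injective on $S$, and compares the components of the image with those of one fixed degree-$k$ curve $C$ containing $\pi(S)$. Writing $D$ for the union of the common components (of degree $k'$) and $D'$, $D''$ for the residual parts of the two images, one gets $\pi(S)\subset D\cup(D'\cap D'')$ with $|D'\cap D''|\leq(k^2-k')(k-k')$ by B\'ezout, and the preimage of $D$ in the space curve is the desired degree-$k'$ curve. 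This reaches exactly your conclusion with no family of curves and no genericity-in-$p$ bookkeeping.
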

\begin{remark}
    We use the convention that a degree $0$ curve is the empty set. In particular, if $|S|>k^3$ then we must have $k'>0.$
\end{remark}

\begin{proof}
First observe that $S$ must be contained in a (possibly reducible) curve $Z\subset\mathbb{P}^n$ of degree at most $k^2$. Indeed, considering two generic projections $\mathbb{P}^n\dashrightarrow \mathbb{P}^{n-1}$, we see that $S$ is contained in two surfaces $F_1, F_2$ of degree $k$ in $\mathbb{P}^n$ which are cones over degree $k$ curves in $\mathbb{P}^{n-1}$. These surfaces do not share components since the projections are generic. Let $\tilde{F}_1$ be a generic cone over $F_1$ with apex of dimension $n-4$ (so that in $\PP^3$ we have $F_1=\tilde{F}_1$). No component of $F_2$ belongs to $\tilde{F}_1$, and so the intersection $\tilde{F}_1 \cap F_2$ is the desired curve of degree at most $k^2$ that contains $S$.\\

Now consider a third generic projection $\pi: \mathbb{P}^n\dashrightarrow \mathbb{P}^{n-1}$. It is regular on $Z$ and its restriction to every component of $Z$ is birational onto an irreducible curve of the same degree in $\mathbb{P}^{n-1}$. Moreover, we can assume that its restriction to $S$ is injective. Now $\pi(S)$ is contained in both $\pi(Z)$ and a degree $k$ curve $C\subset \mathbb{P}^{n-1}$.\\

We now compare the irreducible components of $\pi(Z)$ and of $C$. Let $D$ be the union of their common irreducible components, $D'$ be the union of the irreducible components of $\pi(Z)$ which are not components of $C$, and $D''$ be the union of the components of $C$ which are not components of $\pi(Z)$. Then $$\pi(S)\subset C\cap \pi(Z)\subset D\cup (D'\cap D'').$$ Let $k'\coloneqq\deg D$, which is at most $k=\deg C$. Since $D'\cap D''$ has at most $(k-k')(k^2-k')$ points, it follows that all but at most $(k-k')(k^2-k')$ points of $S$ are contained in $\pi|_{Z}^{-1}(D)$, which is a (possibly reducible) curve of degree $k'$.
\end{proof}
\begin{remark}
   In \Cref{CB-projections} it is necessary to pass to subsets (namely, allowing $k'<k$). Indeed, consider the set $S'\subset \PP^3$ consisting of $n\gg 0$ collinear points, and let $S$ be the union of $S'$ and $5$ general points in $\PP^3$. Then a general projection of $S$ to $\PP^2$ is contained in a reducible cubic curve since there is a conic through $5$ general points. However, $S$ is not covered by a cubic curve in $\PP^3$ because $5$ general points in $\PP^3$ cannot be covered by a degree $2$ curve. Thus, we cannot have $k'=3$ in this situation. However, note that we may take $k'=1$ or $2$. 
 \end{remark}

We now use \Cref{CB-projections} to construct low-degree curves that cover many points of a set $S$ which satisfies a Cayley--Bacharach condition.

\begin{corollary}\label{lem:high-d-lopez}
    Suppose integers $m,r,k\geq 1$ satisfy the inequalities
    \[ m\leq (k+1)r-(k^2-k-1),\quad r\geq 2k-1.\]
    If $n\geq 2$ and $S=\{x_1,\dots,x_m\}\subset \PP^n$ satisfies $\mathit{CB}(r)$, then there exists an integer $0\leq k'\leq k$ such that at least $m-(k-k')(k^2-k')$ points of $S$ lie on a reduced degree $k'$ curve. 
\end{corollary}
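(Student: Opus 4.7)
The plan is to induct on $n \geq 2$. For the base case $n = 2$, the conclusion is immediate from \Cref{lopez}: under the given numerical hypotheses, a set $S \subset \PP^2$ satisfying $\mathit{CB}(r)$ lies on a reduced degree $k$ curve, so we may take $k' = k$ and miss no points.

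For the inductive step $n \geq 3$, given $S \subset \PP^n$ satisfying $\mathit{CB}(r)$, I would choose a generic projection $\pi \colon \PP^n \dashrightarrow \PP^{n-1}$ and first check that $\pi$ is injective on $S$ and that $\pi(S) \subset \PP^{n-1}$ still satisfies $\mathit{CB}(r)$: if a degree $r$ form $f$ on $\PP^{n-1}$ vanishes on all but one point of $\pi(S)$, then $\pi^* f$ is a degree $r$ form on $\PP^n$ vanishing on all but one point of $S$, hence on all of $S$ by $\mathit{CB}(r)$, forcing $f$ to vanish on all of $\pi(S)$. Applying the inductive hypothesis to $\pi(S)$ then produces an integer $k'' \leq k$ and a reduced degree $k''$ curve $C'' \subset \PP^{n-1}$ meeting $\pi(S)$ in at least $m - (k - k'')(k^2 - k'')$ points. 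Setting $T = \pi^{-1}(C'') \cap S$ gives $|T| \geq m - (k - k'')(k^2 - k'')$ with $\pi(T) \subset C''$. An application of \Cref{CB-projections} to $T$ with $k''$ in place of $k$ then yields an integer $k' \leq k''$ and a reduced degree $k'$ curve $V \subset \PP^n$ containing all but at most $(k'' - k')((k'')^2 - k')$ points of $T$. Hence $V$ contains at least $m - (k - k'')(k^2 - k'') - (k'' - k')((k'')^2 - k')$ points of $S$, and the elementary identity
\[ (k - k')(k^2 - k') - (k - k'')(k^2 - k'') - (k'' - k')((k'')^2 - k') = (1 + k + k'')(k - k'')(k'' - k')\]
has non-negative right-hand side for $0 \leq k' \leq k'' \leq k$, so this count is at least $m - (k - k')(k^2 - k')$, as required.

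The main obstacle is justifying the application of \Cref{CB-projections} to $T$: that lemma requires that for a \emph{generic} projection $\PP^n \dashrightarrow \PP^{n-1}$ the image of $T$ lies on a degree $k''$ curve, whereas the construction above furnishes this only for the one fixed projection $\pi$. To bridge this gap I would argue by semi-continuity: the minimal $k''$ for which the inductive hypothesis yields a subset of $\pi'(S)$ of size at least $m - (k - k'')(k^2 - k'')$ on a degree $k''$ curve is a lower semi-continuous integer function of the projection $\pi'$, hence constant on a Zariski-dense open subset of projections, and a refinement should show the corresponding subset $T \subset S$ can be taken independent of $\pi'$ across this open set. An alternative route would be to reprove \Cref{CB-projections} in a strengthened form in which the hypothesis is replaced by ``a subset of $\pi(S)$ of the appropriate size lies on a degree $k$ curve''; the proof, based on intersecting cones over generic projection curves and comparing components via a third projection, should adapt at the cost of an extra loss that is absorbed by the same algebraic identity.
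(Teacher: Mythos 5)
Your argument is essentially the paper's proof: the same induction on $n$ with base case \Cref{lopez}, the same inductive step via a generic projection followed by \Cref{CB-projections}, and your closed-form identity is just the factored form of the paper's two-step estimate $m-(k-k')(k^2-k')-(k'-k'')(k'^2-k'')\geq m-(k-k'')(k^2-k'')$ (with the roles of $k'$ and $k''$ swapped). The genericity issue you flag is genuine but is settled exactly by the pigeonhole you sketch --- $S$ has finitely many subsets and the degree takes finitely many values, so some pair (subset, degree) works on a dense constructible, hence dense open, set of projections --- a uniformity the paper's proof asserts without comment.
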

\begin{proof}
    For $n=2$, we may take $k'=k$ by \Cref{lopez}. For $n>2$, we proceed by induction on $n$; note that the property $\mathit{CB}(r)$ is preserved under generic projections.\\

    By the induction hypothesis, there is an integer $0\leq k' \leq k$ and a collection $S'$ of $m-(k-k')(k^2-k')$ points of $S$ such that a generic projection of $S'$ lies on a curve of degree $k'$. By \Cref{CB-projections}, we can find $0\leq k'' \leqslant k'$ and a degree $k''$ curve containing at least 
    \begin{align*}
        &m-(k-k')(k^2-k')-(k'-k'')(k'^2-k'')\\ &\;\;\geq m-(k-k')(k^2-k'')-(k'-k'')(k^2-k'')\\
        &\;\;=m-(k-k'')(k^2-k'')
    \end{align*}
    points of $S'$. This concludes the induction step.
\end{proof}
\begin{remark}
    If $k'>0$, by passing to irreducible components, there exists an integer $0<k''\leq k'$ such that at least
    \[ \ceil*{\frac{k''}{k'}\left(m-(k-k')(k^2-k')\right)}\]
    points of $S$ lie on an irreducible degree $k''$ curve. This follows from a pigeonhole argument. Let $C$ be the degree $k'\geq 1$ curve passing through at least $m':=m-(k-k')(k^2-k')$ points of $S$. Denote by $C_1,\dots,C_h$ the components of $C$ and write $\deg(C_i)=k_i\geq 1$, where $\sum_{i} k_i=k'$. If for all $1\leq i\leq h$,
    \[|C_i\cap S|<\frac{k_i}{k'}m',\]
    then
    \[ |C\cap S|\leq \sum_i |C_i \cap S|<\frac{\sum_i k_i}{k'}m'=m',\]
    a contradiction. This shows there exists $i$ such that
      \[|C_i\cap S|\geq\ceil*{\frac{k_i}{k'}m'}.\]
  \end{remark}

Before starting the proof of \Cref{CB-transitive} we collect a minimal set of assumptions on $d,r,k$ which reflect the geometric setting of ``low degree points on high degree hypersurface in low dimensional projective space''. For this we introduce the following condition $(\dagger)$ on the integers $d,r,k\geq 1$:

    \begin{enumerate}[label=\textup{($\dagger_{\text{\roman*}}$)}]
        \item\label{dagger1} $d\geq r\geq 2k^2-1;$
        \item\label{dagger2} $d\geq 2k(d-r)+3k^3-k^2-4k+1.$
    \end{enumerate}

Condition $(\dagger)$ gives a minimalistic set of inequalities; since we will need to appeal to their various corollaries we collect all of them in the following cumbersome but entirely elementary lemma.

\begin{lemma}\label{lem:inequalities}
    Suppose integers $d,r,k\geq 1$ satisfy $(\dagger)$. Then
    \begin{enumerate}[label=\textup{(\arabic*)}]
        \item $r\geq 2k-1$;
        \item $kd\leq (k+1)r-(k^2-k-1)$;
        \item For every integer $m'$ with $r+2\leq m'\leq kd$, let
        \[ k'\coloneqq \ceil*{\frac{m'+k^2-k-1}{r}}-1,\]
        and for all integers $k'',k'''$ with $1\leq k'''\leq k''\leq k'$, let
        \[m''\coloneqq\ceil*{\frac{k'''}{k''}(m'-(k'-k'')(k'^2-k''))}\]
        and
        \[l\coloneqq \floor{k/k'''}.\]
        Then
        \begin{enumerate}[label=\textup{(\alph*)}]
            \item $(l+1)m''-\binom{l+1}{2}k'''^2-kd>0$;
             \item $m'>k'^3$;
            \item $m''>kk'''$.
        \end{enumerate}
    \end{enumerate}
\end{lemma}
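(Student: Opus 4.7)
The proof is a direct verification of each inequality from the hypotheses $(\dagger_{\text{i}})$ and $(\dagger_{\text{ii}})$. Claim~(1) is immediate: $2k^2 - 1 \geq 2k - 1$ for every $k \geq 1$. For claim~(2), I would rearrange $(\dagger_{\text{ii}})$ to read $2kr \geq (2k-1)d + 3k^3 - k^2 - 4k + 1$, multiply the target inequality $(k+1)r - kd \geq k^2 - k - 1$ by $2k$, and substitute; the result reduces to a polynomial inequality in $k$ and $d$ that is easily verified (with the case $k = 1$ degenerating to equality).

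For claim~(3), the starting point is a pair of estimates on $k'$ that follow directly from its definition as a ceiling:
\[ k' r - k^2 + k + 1 \;<\; m' \;\leq\; (k'+1) r - k^2 + k + 1. \]
Combining the upper bound with $m' \leq kd$ and claim~(2) yields $k' \leq k$, while the lower bound with $m' \geq r + 2$ gives $k' \geq 1$. Claim~(b) then follows by substituting the lower bound on $m'$ and using $r \geq 2k^2 - 1$; it reduces to showing $k'(2k^2 - k'^2) \geq k' + k^2 - k - 1$, which holds because $k' \leq k$ implies $2k^2 - k'^2 \geq k^2$, so the left hand side is at least $k' k^2 \geq k^2$. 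Claim~(c) similarly follows from the estimate $m'' \geq (k'''/k'')\bigl(m' - (k'-k'')(k'^2 - k'')\bigr)$ combined with the lower bound on $m'$; the resulting polynomial inequality in $k, k', k'', k'''$ holds under $k''' \leq k'' \leq k' \leq k$ and $r \geq 2k^2 - 1$.

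The main work lies in claim~(a). The key observation is that the definition of $l$ as a floor gives $(l+1)k''' \geq k + 1$, and hence $(l+1)(k'''/k'') \geq (k+1)/k''$. Combined with the lower bound on $m''$, this yields
\[ (l+1) m'' \;\geq\; \frac{k+1}{k''}\bigl(m' - (k'-k'')(k'^2 - k'')\bigr), \]
while the identity $\binom{l+1}{2}k'''^2 = (lk''')((l+1)k''')/2$ together with $l k''' \leq k$ and $(l+1)k''' \leq k + k'''$ provides the upper bound $\binom{l+1}{2}k'''^2 \leq k(k+k''')/2$. Substituting the lower estimate on $m'$ in terms of $k'$ and $r$ and invoking $(\dagger_{\text{ii}})$ reduces the desired inequality to a polynomial in $d, k, k', k'', k'''$ whose validity follows from the hypotheses. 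The main obstacle throughout is bookkeeping: carefully tracking the interleaved ceilings and floors to isolate the leading contributions from $m'$, $d$, and the correction terms; no substantive new ideas beyond $(\dagger)$ are needed.
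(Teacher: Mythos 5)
Parts (1), (2), (3b), and (3c) of your argument are fine: (1) is immediate, your rescaled derivation of (2) from $(\dagger_{\text{ii}})$ is a valid (if slightly roundabout) variant of the paper's one-line reduction to $d\geq (k+1)(d-r)+(k^2-k-1)$, and your verifications of (3b) and (3c) from the ceiling estimate $m'>k'r-(k^2-k-1)$ and $(\dagger_{\text{i}})$ match the paper's computations. The problem is (3a), which is the heart of the lemma, and there your proposal has a genuine gap: you replace $(l+1)k'''$ by its lower bound $k+1$ on the $m''$ side while leaving $kd$ untouched, and then assert that the result ``reduces to a polynomial inequality whose validity follows from the hypotheses.'' That deferred inequality is precisely what the lemma is about, and in the form you set it up it is not easily verified --- indeed, if you try to finish by comparing $(k+1)r$ with $kd$ via your claim (2), it is false. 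Concretely, take $k\geq 2$ and $m'$ with $k'=k''=k'''=1$ (possible, e.g.\ $m'=r+2$), so $l=k$, $m''=m'$, and the target is $(k+1)m'-\binom{k+1}{2}-kd>0$. Your estimates reduce this to showing $(k+1)r-kd$ exceeds roughly $(k+1)(k^2-k-1)+\binom{k+1}{2}\sim k^3$, whereas claim (2) only supplies a slack of $k^2-k-1$. One can still salvage your route by going back to $(\dagger_{\text{ii}})$ in full strength together with $d\geq 3k^3-k^2-4k+1$, but the resulting inequality is razor-thin (it degenerates to equality at $k=1$ and is saved only by integrality of the ceilings), so it cannot be waved through as routine bookkeeping.

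The paper avoids this entirely by using $k\leq (l+1)k'''-1$ in the multiplicative form $kd\leq ((l+1)k'''-1)d$ rather than the additive form $(l+1)k'''\geq k+1$. After substituting $m'>(d-N)k'-(k^2-k-1)$ with $N=d-r$, the two large terms $(l+1)k'''r$ and $-(l+1)k'''d$ pair up to give
\[
(l+1)m''-\binom{l+1}{2}k'''^2-kd \;>\; d-(l+1)k'''N-A,
\]
where $A$ collects the lower-order corrections. Since $(l+1)k'''\leq 2k$ and $A\leq 3k^3-k^2-4k+1$, condition $(\dagger_{\text{ii}})$ finishes the proof in one line, with the leftover ``$+d$'' absorbing all error terms. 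This cancellation is the missing idea in your write-up; without it (or without an explicit verification of the tight inequality your reduction produces), the proof of (3a) is incomplete.
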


\begin{proof}
    The inequality (1) follows from \ref{dagger1}, and (2) is equivalent to  
    $$d\geq (k+1)(d-r)+(k^2-k-1),$$
    which follows from \ref{dagger2} and the $d-r\geq 0$ part of \ref{dagger1}.\\
    
    To verify (3a), let $N=d-r\geq 0$ (recall \ref{dagger1}), and let $m',k'',k'''$ be integers satisfying the hypotheses of (3). By (2), we have $1\leq k'\leq k$. Using
    \[m'>rk'-(k^2-k-1),\quad k\leq (l+1)k'''-1,\]
    we get
    \begin{align*}
        &(l+1)m''-\binom{l+1}{2}k'''^2-kd \\&\geq (l+1)\frac{k'''}{k''}\left(m'-(k'-k'')(k'^2-k'')\right)-\binom{l+1}{2}k'''^2-kd  \\
        &> (l+1)\frac{k'''}{k'}\left((d-N)k'-(k^2-k-1)-(k'-k'')(k'^2-k'')\right)\\
        &\qquad-\binom{l+1}{2}k'''^2-((l+1)k'''-1)d \\
        &=d-(l+1)k'''N-A,
    \end{align*}
    where
    \begin{align*}
        A&\coloneqq(l+1)\frac{k'''}{k'}\left((k^2-k-1)+(k'-k'')(k'^2-k'')\right)+\binom{l+1}{2}k'''^2.
    \end{align*}
    Hence, (3a) is verified if we can prove $d\geq (l+1)k'''N+A$ for all choices of $m',k'',k'''$. We now find an upper bound for the right-hand side. First, recall that $lk'''\leq k$, so 
    \[ (l+1)k'''\leq k+k'''\leq 2k.\]
    Second, using $1\leq k'''\leq k''\leq k$ and $lk'''\leq k$, and the fact that $k^2-k-1\geq 0$ if $k\geq 2$, we get
    \begin{align*}
        A&\leq \frac{k+k'''}{k'}\left(k^2-k-1+(k'-1)(k'^2-1)\right)+\frac{(l^2+l)k'''^2}{2}\\
        &\leq\left(\frac{k}{k'}+1\right)(k^2-k-1)+(k+k''')\left(1-\frac{1}{k'}\right)(k'^2-1) +\frac{k^2+kk'''}{2}\\
        &\leq (k+1)(k^2-k-1)+2k\left(1-\frac{1}{k}\right)(k^2-1)+\frac{k^2+k^2}{2}\\
        &=3k^3-k^2-4k+1.
    \end{align*}
    As a result, \ref{dagger2} implies (3a).\\

 To verify (3b), we compute
    \begin{align*}
        m'-k'^3&>(rk'-(k^2-k-1))-k'^3
        \\&= k'\left(r-\left(k'^2+\frac{k^2-k-1}{k'}\right)\right).
    \end{align*}
    Again, this is greater than $0$ if \ref{dagger1} is satisfied.\\

    To verify (3c), we compute
    \begin{align*}
        m''-kk'''&\geq \frac{k'''}{k''}(m'-k'^3)-kk'''\\
        &> k'''\left(\frac{(rk'-(k^2-k-1))-k'^3}{k'}-k\right) \\
        &=k'''\left(r-\left(\frac{k^2-k-1}{k'}+k'^2+k\right)\right).
    \end{align*}
    The last expression is greater than or equal to $0$ if \ref{dagger1} is satisfied.
\end{proof}

\begin{theorem}\label{CB-transitive}
    Suppose $K$ is a field of characteristic zero, $P$ is a closed point of $\PP^n_K$ of degree $kd$ for some integers $d,k\geq 1.$ Let $S$ be the Galois orbit of $P$ in $\PP^n(\Kbar)$. If a subset $S' \subset S$ satisfies $\mathit{CB}(r)$ for some $1\leq r\leq d$ satisfying
    \[r\geq 2k^2-1, \quad d\geq 2k(d-r)+3k^3-k^2-4k+1,\]
    then $S$ can be covered by a reduced curve $C$ of degree at most $k$. Moreover, we can choose such a $C$ which is defined over $K$.
\end{theorem}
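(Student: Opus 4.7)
My strategy is to first construct an irreducible low-degree curve $C_0\subset\PP^n_{\Kbar}$ passing through many points of $S$, and then exploit the transitive Galois action on $S$ together with a B\'ezout estimate to force the orbit of $C_0$ under $\Gal(\Kbar/K)$ to be small; taking the union of this orbit produces a reduced curve of degree at most $k$, defined over $K$, that covers $S$. For the first step, set $m':=|S'|$ and let $k',k'',k''',m'',l$ denote the auxiliary integers from Lemma~\ref{lem:inequalities}(3). We may assume $n\geq 2$, since for $n=1$ the whole of $\PP^1_K$ is a degree-$1$ curve covering $S$. By \Cref{lem:bcd} the Cayley--Bacharach condition forces $m'\geq r+2$, and clearly $m'\leq kd$. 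I then apply \Cref{lem:high-d-lopez} to $S'$ with the parameter $k'$ playing the role of $k$; the hypotheses hold because $r\geq 2k^2-1\geq 2k'-1$, and by the definition of $k'$ together with $k'\leq k$ (coming from Lemma~\ref{lem:inequalities}(2)) we have $m'\leq(k'+1)r-(k^2-k-1)\leq(k'+1)r-(k'^2-k'-1)$. This produces an integer $k''\in\{0,\dots,k'\}$ and a reduced curve of degree $k''$ containing $m'-(k'-k'')(k'^2-k'')$ points of $S'$. The bound $m'>k'^3$ from Lemma~\ref{lem:inequalities}(3b) rules out $k''=0$, and the remark after \Cref{lem:high-d-lopez} then produces an irreducible component $C_0$ of some degree $k'''\in\{1,\dots,k''\}$ passing through at least $m''$ points of $S'$; Lemma~\ref{lem:inequalities}(3c) gives $m''>kk'''$.

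Next, let $\{C_0,\dots,C_{N-1}\}$ be the $\Gal(\Kbar/K)$-orbit of $C_0$. Each $C_i$ is irreducible of degree $k'''$, and the sets $T_i:=C_i\cap S$ are Galois-conjugates of $T_0$, so each has size at least $m''$. I claim $N\leq l$. If instead $N\geq l+1$, pick distinct conjugates $C_0,\dots,C_l$. Since they are pairwise distinct irreducible curves of equal degree they share no common components, so B\'ezout yields $|T_i\cap T_j|\leq|C_i\cap C_j|\leq k'''^2$ for $i\ne j$. Inclusion--exclusion then gives
\[kd=|S|\geq \biggl|\bigcup_{i=0}^{l}T_i\biggr|\geq(l+1)m''-\binom{l+1}{2}k'''^2,\]
which contradicts Lemma~\ref{lem:inequalities}(3a).

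Finally, the union $C:=\bigcup_{i=0}^{N-1}C_i$ is $\Gal(\Kbar/K)$-stable, hence defined over $K$ by Galois descent in characteristic zero; it is reduced as a union of distinct irreducible curves; and it has degree $N\cdot k'''\leq l\cdot k'''\leq k$. The intersection $C\cap S$ is a nonempty Galois-stable subset of the single orbit $S$, and therefore equals $S$, which completes the argument. I expect the main difficulty to lie in the bookkeeping packaged by Lemma~\ref{lem:inequalities}: the hypothesis $\mathit{CB}(r)$ is only known on the (possibly small) subset $S'$, yet we must cover the entire orbit $S$, and the inequalities in the hypothesis of the theorem are tuned precisely so that the per-curve count $m''$ dominates the cumulative B\'ezout cross-terms $\binom{l+1}{2}k'''^2$, thereby capping the size of the Galois orbit of $C_0$.
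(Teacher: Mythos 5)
Your proof is correct and follows essentially the same route as the paper: \Cref{lem:high-d-lopez} plus the remark after it produce the irreducible curve $C_0$ of degree $k'''$ through at least $m''$ points, and the inclusion--exclusion/B\'ezout estimate played against Lemma~\ref{lem:inequalities}(3a) caps the number of conjugates needed. The one (harmless) organizational difference is that you take the full Galois orbit of $C_0$ at the outset, so Galois-stability of $C$ and the covering of $S$ are automatic, and inequality (3c) --- which the paper uses to show that its greedily built union of conjugates is Galois-stable --- is cited but never actually needed in your argument.
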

\begin{proof}
    Our assumptions are precisely $(\dagger)$, so $d,r,k$ also satisfy the conclusions of \Cref{lem:inequalities}.\\
    
    We first use \Cref{lem:high-d-lopez} to find a curve $D$ of controlled degree covering a subset of $S'$. Namely, let $m'\coloneqq|S'|\leq kd$, and
    \[ k'\coloneqq\ceil*{\frac{m'+k^2-k-1}{r}}-1.\]
    Note that $m'\geqslant r+2$ by \Cref{lem:bcd}, $k'\leq k$ by \Cref{lem:inequalities}(1), and $m' > k'^3$ by Lemma \ref{lem:inequalities}(3b).  By (1), the definition of $k'$, and \Cref{lem:high-d-lopez} applied to $S'$ and parameters $m', r, k'$, there exists an integer $0\leq k''\leq k'$ such that at least $m'-(k'-k'')(k'^2-k'')$ points of $S'$ lie on a degree $k''$ curve. Since $m'-(k'-k'')(k'^2-k'')\geq m'-k'^3>0$, we must have $k''\geq 1$. By passing to an irreducible component, there is an integer $1\leq k'''\leq k''$ and an irreducible degree $k'''$ curve $D$ that passes through at least
    \[m''\coloneqq \ceil*{\frac{k'''}{k''}(m'-(k'-k'')(k'^2-k''))}\]
    points of $S'$.\\

    Let $l=\floor{k/k'''}$. We claim that $S$ can be covered by at most $l$ Galois conjugates of $D$, so if we denote their union by $C$, we will have $\deg C\leq k$ as required. To construct $C$, start with $C_1=D$ and take any point in $S$ not contained in $D$. Using transitivity, we can find a Galois conjugate of $D$ through this point and consider the union $C_2$ of $C_1=D$ and this Galois conjugate. We can proceed inductively until every point in $S$ is covered by some curve $C_N$.\\
    
    Suppose  the claim is false, then $C_{l}$ fails to cover $S$, and so $N\geqslant l+1$. Consider the curve $C_{l+1}$. Let's estimate from below the number of points of $S$ covered by $C_{l+1}$. Each conjugate of $D$ contains at least $m''$ points of $S$, and each pair of two conjugates of $D$ intersect in at most $k'''^2$ points since $D$ is irreducible. By inclusion-exclusion, we conclude that 
    \[|C_{l+1}(\overline{K})\cap S|\geqslant (l+1)m'' - \binom{l+1}{2}k'''^2.\]
    But (3a) states that 
    \[(l+1)m'' - \binom{l+1}{2}k'''^2 > |S|=kd,\]
   which provides a contradiction.\\

    Now let $C$ be any output of the construction above. We claim that $C$ is defined over $K$. Suppose not, then there exists an element $g\in \Gal(\overline{K}/K)$ such that $gD$ is not a component of $C$. Then
    \[m''\leq |gS\cap gD|=|S\cap gD|\leq |C\cap gD|\leq kk'''.\]
    But this contradicts (3c).
\end{proof}

\begin{remark}
We will apply \Cref{CB-transitive} to a degree $kd$ point on the universal degree $d$ hypersurface $X_d$. In this case, since $X_d$ cannot contain $K$-rational curves of degree less than $d$, the curve $X$ of \Cref{CB-transitive} is necessarily a degree $k$ curve intersecting $X_d$ transversely.
\end{remark}

\section{Proof of \Cref{thm:main-introduction}}

We begin by proving \Cref{projnormCB}. Following Riedl and Yang \cite{RY22}, a \emph{parametrized} \emph{$r$-plane} \emph{in $\PP^m$} is an injective linear map $\Lambda: \PP^r \to \PP^m$. 
 
Given a pointed hypersurface $(p, X)\in \calX_{m-1,d}$ in $\PP^m$ and $\Lambda: \PP^r \to \PP^m$, a parametrized $r$-plane whose image passes through $p$ and does not lie entirely in $X$, we say that the pair $(\Lambda^{-1}(p),\Lambda^{-1}(X))\in \calX_{r-1,d}$ is a \emph{parametrized $r$-plane section} of $(p,X)$. Together with the Mumford--Ro\u \i tman theorem on zero-cycles (or rather \Cref{corCB}), the main result we will use is the following:

\begin{theorem}[{Theorem 2.3 of \cite{RY22}}]\label{Riedl-Yang}
Fix a positive integer $n_0$ and consider for all $1\leq m\leq n_0$ loci $Z_{m,d}\subset \calX_{m,d}$ which are countable unions of locally closed subvarieties and satisfy:
\begin{enumerate}[label=\textup{(\arabic*)}]
\item If $(p,X)\in Z_{m,d}$ is a parametrized hyperplane section of $(p',X')\in \calX_{m+1,d}$, then $(p',X')\in Z_{m+1,d}$.
\item $Z_{n_0-1,d}$ has codimension at least one in $\calX_{n_0-1,d}$. 
\end{enumerate}
Then the codimension of $Z_{n_0-c,d}$ in $\calX_{n_0-c,d}$ is at least $c$.
\end{theorem}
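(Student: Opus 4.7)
The plan is to prove the theorem by induction on $c \geq 0$. The case $c = 0$ is trivial and $c = 1$ is hypothesis (2). For the inductive step, suppose $\codim Z_{n_0-c,d} \geq c$ in $\calX_{n_0-c,d}$ (with $c \geq 1$); I aim to show $\codim Z_{n_0-c-1,d} \geq c+1$ in $\calX_{n_0-c-1,d}$. Since the conclusion only depends on a single irreducible component of $Z_{n_0-c-1,d}$ of minimal codimension, the countable union structure is not an obstacle.

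The main construction is the incidence variety
\[ I = \bigl\{((p,X),(p',X'),\Lambda) : (p,X) = (\Lambda^{-1}(p'),\Lambda^{-1}(X'))\bigr\} \subset \calX_{n_0-c-1,d} \times \calX_{n_0-c,d} \times G, \]
where $G$ is the variety of parametrized hyperplanes $\Lambda: \PP^{n_0-c} \hookrightarrow \PP^{n_0-c+1}$. The projections $\pi_1: I \to \calX_{n_0-c-1,d}$ and $\pi_2: I \to \calX_{n_0-c,d}$ are both surjective with smooth equidimensional generic fibers of dimensions $F_1$ and $F_2$, satisfying the basic dimension identity $\dim \calX_{n_0-c-1,d} + F_1 = \dim I = \dim \calX_{n_0-c,d} + F_2$. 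Condition (1) translates neatly to the inclusion $\pi_1^{-1}(Z_{n_0-c-1,d}) \subset \pi_2^{-1}(Z_{n_0-c,d})$.

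A naive dimension chase from this inclusion, combined with the inductive hypothesis, gives only $\codim Z_{n_0-c-1,d} \geq \codim Z_{n_0-c,d} \geq c$. To extract the final unit of codimension, one shows that the restriction $\pi_2|_{\pi_1^{-1}(Z_{n_0-c-1,d})}$ has generic fiber of dimension at most $F_2 - 1$: for a generic $(p',X') \in Z_{n_0-c,d}$, the $F_2$-dimensional family of parametrized hyperplane sections of $(p',X')$ contains at least one member $(p,X) \notin Z_{n_0-c-1,d}$. Note that $Z_{n_0-c-1,d}$ is indeed a proper subvariety, since otherwise propagating up via (1) would force $Z_{n_0-c,d} = \calX_{n_0-c,d}$, contradicting the inductive hypothesis. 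The intuition is that the Grassmannian $G$ is large enough that varying $\Lambda$ through a generic $\PGL$-orbit forces the section $(p,X)$ to move nontrivially in $\calX_{n_0-c-1,d}$, so its image cannot be contained in any proper subvariety.

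I expect the main obstacle to be quantifying this strict fiberwise inclusion. Making it precise requires a moving-lemma type argument on $G$, showing that for a generic $(p',X') \in Z_{n_0-c,d}$, the image of the fiber $\pi_2^{-1}(p',X')$ in $\calX_{n_0-c-1,d}$ under $\pi_1$ is not contained in $Z_{n_0-c-1,d}$. Concretely, one exploits the transitive $\PGL_{n_0-c+2}$-action on $G$ (by reparametrization of $\PP^{n_0-c}$) together with the observation that the subvariety of $\calX_{n_0-c-1,d}$ swept out by sections of a fixed $(p',X')$ has positive dimension and varies with $(p',X')$. Combining this fiber-dimension drop with the basic identity and the inclusion yields the stronger codimension bound $\codim Z_{n_0-c-1,d} \geq c+1$, completing the induction.
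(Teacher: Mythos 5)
First, a caveat: the paper does not prove this statement --- it is imported verbatim from Riedl--Yang \cite{RY22} (their Theorem 2.4) and used as a black box, so there is no internal proof to compare against; I can only assess your argument on its own terms. Your setup is the natural one and its correct parts are genuinely correct: the incidence correspondence $I$, the identity $\dim \calX_{n_0-c-1,d}+F_1=\dim \calX_{n_0-c,d}+F_2$, the translation of hypothesis (1) into $\pi_1^{-1}(Z_{n_0-c-1,d})\subset \pi_2^{-1}(Z_{n_0-c,d})$, and the resulting weak bound $\codim Z_{n_0-c-1,d}\geq \codim Z_{n_0-c,d}$ all hold (and the reduction to a single component of minimal codimension does dispose of the countable-union issue).

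The gap is the step you flag as the ``main obstacle'': the claim that for a generic $(p',X')$ in a top-dimensional component of $Z_{n_0-c,d}$ the fiber of $\pi_2$ restricted to $\pi_1^{-1}(Z_{n_0-c-1,d})$ has dimension at most $F_2-1$, i.e.\ that some hyperplane section of $(p',X')$ escapes $Z_{n_0-c-1,d}$. This is not a technical refinement to be supplied by a moving lemma; it is the entire content of the theorem, and the justification offered is fallacious. The image $\pi_1\bigl(\pi_2^{-1}(p',X')\bigr)$ is a subvariety of $\calX_{n_0-c-1,d}$ of dimension at most $F_2$, which is far smaller than $\dim \calX_{n_0-c-1,d}$, so it can perfectly well be contained in a proper closed subset: properness of $Z_{n_0-c-1,d}$ implies nothing here, and transitivity of $\PGL$ on parametrized hyperplanes is irrelevant because the swept-out locus is not being compared with anything $\PGL$-invariant. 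Note moreover that hypothesis (1) constrains only in the upward direction (a bad section forces the ambient pair to be bad); nothing in the hypotheses rules out a priori that \emph{every} hyperplane section of a generic point of some component of $Z_{n_0-c,d}$ lies in $Z_{n_0-c-1,d}$, and a naive dimension count shows that the set of all sections of such a component can have codimension as large as $c$ in $\calX_{n_0-c-1,d}$ without contradiction. In \cite{RY22} the extra unit of codimension is extracted by a different mechanism --- the Grassmannian technique alluded to in the introduction of this paper, which controls the locus of planes through a fixed point whose sections are bad and plays off the linear-fibration structure of the restriction maps between spaces of hypersurfaces. Without an argument of that kind, your induction only reproduces $\codim Z_{n_0-c,d}\geq 1$ for all $c\geq 1$.
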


We begin by proving \Cref{projnormCB} which we restate here for the reader's convenience.

\begin{proposition}[= \Cref{projnormCB}]
Let $X\subset\mathbb{P}^{n+1}_\C$ be a very general hypersurface of degree $d\geq 2n+3$. If an effective zero-cycle $p_1+\dots+p_{kd}$ of degree $kd$ on $X$ is rationally equivalent to $k\cdot c_1(\mathcal{O}_X(1))^{n}$ then a subset of $\{p_1,\dots,p_{kd}\}$ satisfies $\mathit{CB}(d-2n-2)$.\label{projnormCB-restated}
\end{proposition}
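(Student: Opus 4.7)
The plan is to argue by contradiction, combining the Riedl--Yang codimension-growth theorem (\Cref{Riedl-Yang}) with the Mumford--Ro\u \i tman consequence \Cref{cor:rat-equiv-implies-cb}. Assume for contradiction that the conclusion fails: for very general $X \in B_{n,d}$ there exists a degree-$kd$ effective zero-cycle $\alpha$ on $X$, rationally equivalent to $k\cdot c_1(\calO_X(1))^n$, whose support has no subset satisfying $\mathit{CB}(d-2n-2)$. Setting $r = d-2n-2$, I would define $Z_{m,d} \subset \calX_{m,d}$ for every $m \geq 1$ to consist of pointed hypersurfaces $(p, X)$ for which there is a degree-$kd$ effective zero-cycle $\alpha$ on $X$ with $p \in \mathrm{Supp}(\alpha)$, $\alpha \sim k\cdot c_1(\calO_X(1))^m$, and no subset of $\mathrm{Supp}(\alpha)$ satisfying $\mathit{CB}(r)$. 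Standard considerations show each $Z_{m,d}$ is a countable union of locally closed subvarieties.

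The first substantive step is to verify the compatibility hypothesis (1) of \Cref{Riedl-Yang}. If $X$ is a hyperplane section of $X'$ via $\iota: X \hookrightarrow X'$, then $[X] = c_1(\calO_{X'}(1))$ and the projection formula give $\iota_*(c_1(\calO_X(1))^m) = c_1(\calO_{X'}(1))^{m+1}$, so $\iota_*\alpha$ has the required rational equivalence class on $X'$ and the same support as $\alpha$. The condition $\mathit{CB}(r)$ on a finite subset $S$ of points is preserved under the passage from $\PP^{m+1}$ to $\PP^{m+2}$, because the restriction map $H^0(\PP^{m+2},\calO(r)) \twoheadrightarrow H^0(\PP^{m+1},\calO(r))$ is surjective, allowing any Cayley--Bacharach obstruction in one ambient space to be lifted to or restricted from the other. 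Hence $(p,X) \in Z_{m,d}$ implies $(p,X') \in Z_{m+1,d}$.

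I would then apply \Cref{Riedl-Yang} with $n_0 = 2n+1$. By the contradiction hypothesis, $Z_{n,d}$ dominates $B_{n,d}$, so $\mathrm{codim}(Z_{n,d}, \calX_{n,d}) \leq n$. The conclusion of the theorem at $c = n+1$ would force this codimension to be at least $n+1$, which is impossible; the only escape is that hypothesis (2) of the theorem fails, i.e., $Z_{2n,d}$ has codimension $0$ in $\calX_{2n,d}$. Equivalently, for very general $X' \in B_{2n,d}$ the fiber of $Z_{2n,d}$ over $X'$ is Zariski dense in $X'$.

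Finally, fix such an $X'$ and let $W_{X'} \subset \mathrm{Sym}^{kd}(X')$ be the locus of effective zero-cycles rationally equivalent to $k\cdot c_1(\calO_{X'}(1))^{2n}$ whose support has no subset satisfying $\mathit{CB}(r)$. The universal support map $W_{X'} \to X'$ is dominant, so some irreducible component $Y \subset W_{X'}$ has dominant support map, and the inclusion $\phi: Y \hookrightarrow \mathrm{Sym}^{kd}(X')$ meets the hypotheses of \Cref{cor:rat-equiv-implies-cb}. That corollary produces, for generic $y \in Y$, a subset of $\mathrm{Supp}(\phi(y))$ satisfying the Cayley--Bacharach condition for $K_{X'}$. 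Adjunction gives $K_{X'} = \calO_{X'}(d-2n-2) = \calO_{X'}(r)$, and the restriction $H^0(\PP^{2n+1},\calO(r)) \twoheadrightarrow H^0(X',\calO_{X'}(r))$ is surjective (the potential obstruction lies in $H^1(\PP^{2n+1},\calO(r-d)) = 0$), so this subset in fact satisfies $\mathit{CB}(r)$ in $\PP^{2n+1}$, contradicting $y \in W_{X'}$. The main difficulty I expect is the bookkeeping around $n_0 = 2n+1$: this value is forced simultaneously by the adjunction identity $K_{X'} = \calO(r)$ on hypersurfaces in $\PP^{2n+1}$, and by the Riedl--Yang codimension bound $n+1$ needing to exceed the a priori bound $n$ on $\mathrm{codim}\, Z_{n,d}$.
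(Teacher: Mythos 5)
Your argument is correct and follows essentially the same route as the paper: the same loci $Z_{m,d}$ of pointed hypersurfaces carrying a rationally trivial cycle with no $\mathit{CB}(r)$ subset, the same application of \Cref{Riedl-Yang} with $n_0=2n+1$, and the same use of \Cref{cor:rat-equiv-implies-cb} together with $K_{X'}=\calO_{X'}(d-2n-2)$ to rule out $Z_{2n,d}$ having codimension zero. The one step you defer to ``standard considerations''---that the condition ``no subset of the support satisfies $\mathit{CB}(r)$'' cuts out a constructible set, so that each $Z_{m,d}$ really is a countable union of locally closed subvarieties---is precisely the technical verification the paper carries out in a separate lemma (via an incidence variety over $|\calO_{\PP^{m+1}}(r)|$ and Chevalley's theorem), and it must be supplied before \Cref{Riedl-Yang} can legitimately be invoked.
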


\begin{proof}
For $r,m\geq 1$, let
\begin{align*}
Z_{m,d}^{(r)}  &\coloneqq  \Bigg  \{ (p_1,X) \in \mathcal{X}_{m,d}  
\Bigg | \parbox{2.7in}{\begin{center} $\exists p_2,\ldots, p_{kd}\in X$ such that $\sum_{i=1}^{kd} p_i\sim_{\text{rat}}k c_1(\mathcal{O}(1))^{m}$ and no subset of $p_1,\ldots, p_{kd}$ satisfies $\mathit{CB}(r)$\end{center}}  
\Bigg  \}.
\end{align*}
We want to apply \Cref{Riedl-Yang} to the sets $Z_{m,d}^{(r)}$ so we verify some of the hypotheses of the theorem.\\

\begin{lemma}
The subsets $Z_{m,d}^{(r)}$ are countable unions of locally closed subvarieties and they satisfy condition $(1)$ of \Cref{Riedl-Yang}.
\end{lemma}
\begin{proof}
It is easy to show (see for instance \cite{roitman1971gamma}) that given a smooth hypersurface $X\subset \mathbb{P}^{m+1}$ of degree $d$ and $k\in \mathbb{Z}_{>0}$ the following subset is a countable union of locally closed subsets:
\begin{align*}
W_1=\{(x_1,\dots, x_{kd}): x_1+\dots+x_{kd}\sim_{\textup{rat}}\mathcal{O}_X(1)^m\}\subset X^{kd}.
\end{align*}
We will show that the following subset is constructible.
\begin{align*}
W_2=\{(x_1,\dots, x_{kd}): \textup{ a subset of } {x_1,\dots, x_{kd}} \textup{ satisfies }\mathit{CB}(r)\}\subset X^{kd}.
\end{align*}
 Of course this implies that $Z_{m,d}^{(r)}=\text{pr}_1(W_1\setminus W_2)$ is a countable union of locally closed subsets.\\

First, to show that $W_2$ is constructible it suffices to show that for any positive integer $i$, the following subset is constructible
$$\{(x_1,\dots, x_i): \{x_1,\dots, x_i\}\text{ does not satisfy }\mathit{CB}(r)\}\subset (\mathbb{P}^{m+1})^i.$$
Of course, it suffices to show that the following subset is constructible
$$S\coloneqq \{(x_1,\dots, x_i): H^0(\mathcal{O}_{\mathbb{P}^n}(r)\otimes \mathcal{I}_{x_1,\dots, x_{i-1}})\neq  H^0(\mathcal{O}_{\mathbb{P}^n}(r)\otimes \mathcal{I}_{x_1,\dots, x_i})\}\subset (\mathbb{P}^{m+1})^i.$$
 Let $\mathcal{Y}\subset \mathbb{P}^{m+1}\times |\mathcal{O}_{\mathbb{P}^{m+1}}(r)|$ be the universal hypersurface of degree $r$ and consider the fiber products $$F_i\coloneqq \mathcal{Y}^{i}_{|\mathcal{O}_{\mathbb{P}^{m+1}}(r)|}\subset (\mathbb{P}^{m+1})^{i}\times |\mathcal{O}_{\mathbb{P}^{m+1}}(r)|,$$
 and
 $$F_{i-1}\coloneqq \mathcal{Y}^{i-1}_{|\mathcal{O}_{\mathbb{P}^{m+1}}(r)|}\subset (\mathbb{P}^{m+1})^{i-1}\times |\mathcal{O}_{\mathbb{P}^{m+1}}(r)|.$$
Consider the following pullback diagram
 \[
 \begin{tikzcd}[column sep=large]
F_i' \ar[r] \ar[d] & F_{i-1}\ar[d,"\phi_i"]\\
(\mathbb{P}^{m+1})^{i}\ar[r,"\text{pr}_{1\dots (i-1)}"] & (\mathbb{P}^{m+1})^{i-1}.
 \end{tikzcd}
 \]
 We can think of $F_i'$ as a closed subset of $(\mathbb{P}^{m+1})^{i}\times |\mathcal{O}_{\mathbb{P}^{m+1}}(r)|$. Now the fiber of $F_i$ over $(x_1,\dots, x_i)$ is the linear subspace of $|\mathcal{O}_{\mathbb{P}^{m+1}}(r)|$ consisting of polynomials vanishing at $x_1,\dots, x_i$. The fiber of $F_i'$ over the same point is the linear subspace of $|\mathcal{O}_{\mathbb{P}^{m+1}}(r)|$ consisting of polynomials vanishing at $x_1,\dots, x_{i-1}$. We deduce that $$S=\text{pr}_{(\mathbb{P}^{m+1})^{i}}(F_i'\setminus F_i).$$
 Chevalley's theorem then implies that $S$ is constructible. 
 \renewcommand{\qedsymbol}{$\diamondsuit$}
\end{proof}

We continue the proof of \Cref{projnormCB-restated}. Fix $n,d,k$ with $d\geq 2n+3$, and let $n_0=2n+1$ and $r=d-2n-2\geq 1$. We shall show that the subsets $Z^{(r)}_{m,d}$ for $1\leq m\leq n_0$ satisfy the assumption (2) of \Cref{Riedl-Yang}, namely $Z_{2n,d}^{(d-2n-2)}\subset \mathcal{X}_{2n,d}$ has codimension at least one.\\

We first present the end of the proof assuming this. \Cref{Riedl-Yang} then implies that $$Z_{n,d}^{(d-2n-2)}\subset \mathcal{X}_{n,d}$$
has codimension at least $(n+1)$, and thus for a very general degree $d$ hypersurface $X\subset \mathbb{P}^{n+1}$, the fiber of $Z^{(d-2n-2)}_{n,d}$ over $[X]\in B_{n,d}$ is empty. It follows that every effective zero-cycle of degree $kd$ rationally equivalent to $kc_1(\mathcal{O}(1))^{n}$ has a subset that satisfies $\mathit{CB}(d-2n-2)$, as claimed.\\

We now prove the claim by contradiction. If the claim does not hold, given a generic hypersurface $X$ of degree $d$ in $\mathbb{P}^{2n+1}$, there is a locally closed subvariety $Y\subset \text{Sym}^{kd}X$ of dimension $2n$ such that
\begin{enumerate}
\item the restriction of the map $\text{Sym}^{kd}X\longrightarrow \text{CH}_0(X)$ to $Y$ is constant,
\item for a general $p_1\in X$ there are points $p_2,\dots, p_{kd}$ such that $p_1+\cdots+p_{kd}\in Y$,
\item a general point of $Y$ parametrizes an effective zero-cycle of degree $kd$ whose support does not contain a subset satisfying $\mathit{CB}(d-2n-2)$.
\end{enumerate}

By (1)--(2), the assumptions of \Cref{corCB} are satisfied. Thus a generic point of $Y$ parametrizes an effective zero-cycle of degree $kd$ whose support contains distinct points $x_1,\dots,x_m\in X$ with $m\leq d$ satisfying the Cayley--Bacharach condition for $K_X$. However, $K_X=\calO(d-2n-2)$, violating assumption (3).
\end{proof}

We are now ready to prove our main theorem for effective cycles of dimension zero.

\begin{theorem}[$\subset$ \Cref{thm:main-introduction}]\label{thm:main-0cyc}
    Suppose $d,n,k$ are integers satisfying 
    \[ d\geq 4 k n + 3 k^3 - k^2 + 1. \]
    Then every degree $kd$ point on $X_{n,d}$ is contained in a degree $k$ curve not lying on $X_{n,d}$. 
\end{theorem}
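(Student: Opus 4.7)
The plan is to chain together \Cref{lemma:chow-group-calculation}, \Cref{projnormCB-restated}, and \Cref{CB-transitive}. Let $P$ be a degree $kd$ point of $X_{n,d}$, viewed as a rational multisection $\calY$ of $\pi:\calX_{n,d}\to B_{n,d}$ of degree $kd$. By \Cref{lemma:chow-group-calculation}, for a general $b\in B_{n,d}$ the fiber $\calY_b$ is an effective zero-cycle of degree $kd$ on the smooth hypersurface $\calX_{n,d,b}$ rationally equivalent to $k\cdot c_1(\calO(1))^{n}$. The hypothesis $d\geq 4kn+3k^3-k^2+1$ forces $d\geq 2n+3$, so \Cref{projnormCB-restated} applies, yielding that for very general $b$ some subset of $\supp(\calY_b)$ satisfies $\mathit{CB}(d-2n-2)$.

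Next I would transfer this from specializations to the Galois orbit $S\subset\PP^{n+1}(\Kbar)$ of $P$. Spreading $S$ out over an open subset of $B_{n,d}$ gives an \'etale cover whose fiber over general $b$ is $\supp(\calY_b)$. For each equivariant ``type'' of subset $I$ of size $m\leq kd$, let $U_I\subset B_{n,d}$ denote the locus over which $I$ satisfies $\mathit{CB}(d-2n-2)$; the failure of CB corresponds to the surjectivity of an evaluation map on a pushforward of ideal sheaves, which is an open condition, so $U_I$ is closed. The previous step gives $\bigcup_I U_I \supset \{\text{very general } b\}$. As this is a finite union, its complement is a finite intersection of open sets; if nonempty it would contain Zariski-dense, hence very general, points, a contradiction. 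Therefore $\bigcup_I U_I=B_{n,d}$, and by finiteness some $U_{I_0}$ must already contain the generic point of $B_{n,d}$, giving a subset $S'\subset S$ that satisfies $\mathit{CB}(d-2n-2)$ in $\PP^{n+1}_K$.

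Now I would apply \Cref{CB-transitive} with $r=d-2n-2$. Its first hypothesis $r\geq 2k^2-1$ reads $d\geq 2n+2k^2+1$, and follows from our bound since $4kn+3k^3-k^2+1-(2n+2k^2+1)=2n(2k-1)+3k^2(k-1)\geq 0$. The second hypothesis $d\geq 2k(d-r)+3k^3-k^2-4k+1$ becomes, after substituting $d-r=2n+2$, precisely $d\geq 4kn+3k^3-k^2+1$, which is our hypothesis. The theorem therefore produces a reduced curve $C\subset \PP^{n+1}_K$ of degree $k'\leq k$, defined over $K$, containing the full Galois orbit of $P$. As noted in the remark after \Cref{CB-transitive}, $X_{n,d}$ contains no $K$-rational curves of degree less than $d$ (by a standard incidence-dimension count), so no component of $C$ lies in $X_{n,d}$; hence $C\cap X_{n,d}$ is a zero-cycle on $C$ of degree $k'd\leq kd$ by B\'ezout, and contains the Galois orbit of $P$ of degree exactly $kd$. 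This forces $k'=k$ and transversality of the intersection.

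The main obstacle is the semi-continuity/pigeonhole argument of the second paragraph, transferring \Cref{projnormCB-restated}'s ``very general $b$'' conclusion into a statement about the Galois orbit in the generic fiber. Once this is set up, everything else is a direct chain of applications together with the two elementary numerical checks above.
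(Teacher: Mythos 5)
Your proposal follows essentially the same route as the paper's proof: chain \Cref{lemma:chow-group-calculation}, \Cref{projnormCB}, and \Cref{CB-transitive} with $r=d-2n-2$, and your two numerical verifications are correct. The only divergence is your second paragraph: the paper avoids the spreading-out/semicontinuity step entirely by (implicitly) choosing an embedding $\overline{K}\hookrightarrow\C$ that identifies the geometric generic fiber with a very general member of the family, so that \Cref{projnormCB} applies verbatim to the Galois orbit; if you do run your transfer argument, note that the loci $U_I$ are only constructible (this is exactly what the paper's Chevalley-theorem lemma establishes for the analogous locus), not obviously closed, though constructibility already suffices for your pigeonhole.
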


\begin{proof}
Given a Galois orbit of $kd$ distinct points $p_1,\ldots,p_{kd} \in X_{n,d}(\overline{K})$, the zero-cycle $p_1+\cdots+p_{kd}$ is rationally equivalent to $kc_1(\mathcal{O}(1))^{n}$ by \Cref{lemma:chow-group-calculation}. By \Cref{projnormCB}, there is a subset of the $p_i$'s which satisfies $\mathit{CB}(d-2n-2)$.\\

Our numerical assumption $d\geq 4 k n + 3 k^3 - k^2 + 1$ is equivalent to the second hypothesis of \Cref{CB-transitive} with $r=d-2n-2$. It also implies the first hypothesis $d\geq 2n+2+2k^2-1$, as can be seen from the simple calculation
\[(4 k n + 3 k^3 - k^2 + 1)-(2n+2+2k^2-1)=(3 k^2 + 2 n) (k - 1) + 2 k n \geq 0.\]
We then conclude that there is a degree $k$ curve $C$, defined over $K$, which contains $p_1, \dots, p_{kd}$. But a degree $k$ curve always has degree $k$ points, and so $C \not\subset X_{n,d}$ since $k<d$. 
\end{proof}

\section{Effective cycles}\label{sec:cycles}

To prove \Cref{thm:main-introduction} for cycles of all dimensions we appeal to the following    special case of a theorem of Chiantini and Ciliberto.

\begin{theorem}[Chiantini--Ciliberto \cite{chi-cil}]\label{chi-cil-lifting}
       Let $n,d,k$ be positive integers satisfying $d> 2(n+k)$. Suppose $Z \subset \PP^{n+1}_\C$ is a degree $kd$ subvariety such that for a general linear subspace $\Lambda$ of dimension $n+1-\dim Z$ the intersection $\Lambda \cap Z$ lies on a degree $k$ irreducible curve. Then $Z$ belongs to a unique degree $k$ subvariety $W$ of $\PP^{n+1}$ of dimension $\dim Z + 1$. 
\end{theorem}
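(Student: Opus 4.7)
Set $r=\dim Z$. The plan is to construct the desired $W$ directly as the union of the family of degree $k$ curves promised by the hypothesis, and then to verify that this construction has the expected dimension and degree.

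First I would show that for each general linear $(n+1-r)$-plane $\Lambda$, the degree $k$ irreducible curve $C_\Lambda\supset \Lambda\cap Z$ is contained in $\Lambda$ and is uniquely determined by $\Lambda$. The containment $C_\Lambda\subset\Lambda$ follows from Bezout: otherwise $|C_\Lambda\cap\Lambda|\leq\deg C_\Lambda=k$, which is incompatible with $|\Lambda\cap Z|=kd$ once $d>1$. Uniqueness is the crucial use of the numerical hypothesis: two distinct irreducible curves of degree $\leq k$ inside $\Lambda$ meet in at most $k^2$ points by Bezout in $\Lambda$, whereas our bound $d>2(n+k)\geq 2k$ gives $kd>2k^2$, so no two distinct such curves can simultaneously contain $\Lambda\cap Z$. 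This makes $\Lambda\mapsto C_\Lambda$ well-defined on a Zariski-open $U\subset\mathrm{Gr}(n+2-r,\,n+2)$, yielding a flat incidence family $\mathcal{C}\subset U\times\PP^{n+1}$ whose fiber over $\Lambda$ is $C_\Lambda$.

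I would then define $W\coloneqq \overline{p_2(\mathcal{C})}\subset\PP^{n+1}$, where $p_2$ is the projection to $\PP^{n+1}$. A general point $z\in Z$ is contained in some $\Lambda\in U$ (the $\Lambda$'s through $z$ form a Schubert subvariety of the Grassmannian of positive dimension), hence $z\in\Lambda\cap Z\subset C_\Lambda\subset W$, so $Z\subset W$. For a general $\Lambda\in U$, one obviously has $C_\Lambda\subset W\cap\Lambda$, and the goal is to upgrade this to a scheme-theoretic equality $W\cap\Lambda=C_\Lambda$. Granted this, one immediately obtains $\dim W=r+1$ (since $\Lambda$ has codimension $r$) and $\deg W=\deg C_\Lambda=k$, and uniqueness of $W$ follows because any other degree $k$ subvariety $W'\supset Z$ of dimension $r+1$ would produce a second degree $k$ curve $W'\cap\Lambda\supset\Lambda\cap Z$ inside $\Lambda$, contradicting the Bezout uniqueness established above.

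The main obstacle, as in any Laudal-type lifting theorem, is to rule out the existence of an unwanted higher-dimensional component of $W\cap\Lambda$ (equivalently, to show $\dim W\leq r+1$). To do this I would study how $C_\Lambda$ varies in a one-parameter subfamily $\{\Lambda_t\}$ of $U$ and control the rational map $U\to\mathrm{Chow}_1(\PP^{n+1})$, $\Lambda\mapsto[C_\Lambda]$: if $W$ had dimension $>r+1$, then for a generic one-parameter family $\Lambda_t$ the curves $C_{\Lambda_t}$ would sweep out a positive-dimensional locus of $\Lambda$-sections inside some fixed $\Lambda$, producing a second irreducible degree $\leq k$ curve inside $\Lambda$ sharing at least $kd-O(k^2n)$ points with $C_\Lambda$ (the $O(k^2n)$ coming from how the common points $\Lambda_t\cap Z\cap\Lambda$ degenerate as $t$ varies). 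The hypothesis $d>2(n+k)$ is tailored precisely so that $kd$ exceeds the Bezout cap $k^2$ plus all such error terms, producing the desired contradiction. This last estimate, which links the numerics $d>2(n+k)$ to a genuine dimension bound on $W$, is the technical heart of the argument and is where I expect the book-keeping to be most delicate.
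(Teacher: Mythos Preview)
The paper does not actually prove this theorem: it simply invokes \cite[Theorem~0.2]{chi-cil}, checks that the numerical hypothesis $d>2(n+k)$ implies the (more complicated) inequality required there, and observes that uniqueness is immediate since $kd>k^2$ forces any two degree $k$ irreducible curves through $\Lambda\cap Z$ to coincide. In other words, the ``proof'' in the paper is a citation plus a parameter translation.

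Your proposal instead tries to reprove the Chiantini--Ciliberto lifting theorem from scratch. The setup is sound: the containment $C_\Lambda\subset\Lambda$, the Bezout uniqueness of $C_\Lambda$, the definition of $W$ as the sweep of the $C_\Lambda$'s, the inclusion $Z\subset W$, and the deduction of uniqueness of $W$ from uniqueness of $C_\Lambda$ are all fine and are standard first moves in this kind of argument. But the step you yourself flag as ``the technical heart'' --- showing that $\dim W\leq r+1$, equivalently that $W\cap\Lambda=C_\Lambda$ for general $\Lambda$ --- is not a proof, only a heuristic. Your one-parameter degeneration sketch does not pin down where the ``second irreducible degree $\leq k$ curve inside $\Lambda$'' would come from (the curves $C_{\Lambda_t}$ live in different planes, so a limit of them need not be irreducible, need not have degree $\leq k$, and need not lie in $\Lambda$ in any useful way), nor does it justify the claimed count of $kd-O(k^2 n)$ shared points. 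This is exactly the difficulty that makes Laudal-type lifting theorems nontrivial; the actual proofs in the literature go through careful control of Hilbert functions, socle degrees, or differential/infinitesimal arguments rather than a naive degeneration-plus-Bezout count. So while your outline captures the \emph{shape} of a lifting argument, it does not supply the key idea, and as written it should be regarded as a reduction to the cited theorem rather than an independent proof of it.
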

\begin{proof}
    The existence of $W$ follows from \cite[Theorem 0.2]{chi-cil} with ${\bf X}=Z,$ ${\bf n}=\dim Z,$ ${\bf d} = kd$, ${\bf h}=n+1-\dim Z$, ${\bf s}=k$, and ${\bf r}=n+1$ (the letters in bold correspond to the notation used in \cite{chi-cil}). Note that in \cite{chi-cil} the word ``variety'' is used for a possibly reducible scheme of finite type over $\C$; in particular ${\bf X}$ is \emph{not} assumed to be irreducible. We remark also that our numerical condition $d>2(n+k)$ implies the numerical condition required by \cite[Theorem 0.2]{chi-cil}, namely 
    \[kd> k(2n-1-\dim Z)+m(m-1)(n-\dim Z)+2mr-2,\]
    where $m$ and $r$ are the integral part and the remainder of the fraction $(k-1)/(n-\dim Z)$. Finally, note that since $d>k$, the set $\Lambda \cap Z $ of $kd$ points cannot lie on two different irreducible curves of degree $k$. Therefore \cite[Theorem 0.2]{chi-cil} shows that $W$ is unique.
\end{proof}
\begin{remark}
 In the setting of Theorem \ref{chi-cil-lifting}, if $Z$ is irreducible, then the monodromy of the intersection $\Lambda \cap Z$ is the full symmetric group (as $\Lambda$ varies). More precisely, consider the incidence correspondence 
 \[I =\{(x, \Lambda) \colon x \in \Lambda\}\subset Z \times \Gr(n+1-\dim Z, n+1).\]
 The \emph{sectional monodromy group} is the monodromy of the generically finite degree $\deg Z$ cover $I \longrightarrow \Gr(n+1-\dim Z, n+1)$. It is the full symmetric group $S_{\deg Z}$ since it is transitive (since $I$ is irreducible) and contains a transposition, which can be seen by choosing a simply tangent plane $\Lambda$. Similarly, when $Z$ is not irreducible, since one can choose $\Lambda$ tangent to one irreducible component and intersecting the rest transversely, the monodromy of $\Lambda \cap Z$ is a product of symmetric groups. This provides a kind of ``uniform position principle'' for the generic intersection $\Lambda \cap Z$, which we will use later (compare with the classical case of curves \cite[Chapter 3 \S 1]{ACGH} ) 
\end{remark}

To apply \Cref{chi-cil-lifting} to our situation, we need to work around the hypothesis of irreducibility over $\C$ of the interpolating curve. First, we have the following lemma.

\begin{lemma}\label{lemirr}
   Let $n,k,d$ be positive integers such that $d \geqslant 2$. Consider $Z\subset \PP^n_\C$ an equidimensional variety of degree $kd$, and write $Z_1,\ldots, Z_s \subset Z$ for its irreducible components. Suppose that for a general linear subspace $\Lambda$ of dimension $n-\dim Z + 1$ the intersection $\Lambda \cap Z$ belongs to a curve $\Gamma$ of degree $k$. Then there is a collection $\Gamma_1, \dots, \Gamma_m$ of irreducible components of $\Gamma$  such that
    \begin{enumerate}[label=\textup{(\arabic*)}]
        \item\label{cond1} $\Gamma_i$ contains $d \deg \Gamma_i$ points of $Z \cap \Lambda$;
        \item\label{cond2} For all $i,j$ either $\Gamma_i$ contains $Z_j \cap \Lambda$ or $\Gamma_i \cap (Z_j \cap \Lambda)=\emptyset$;
        \item\label{cond3} $\bigcup_i \Gamma_i$ contains $\Lambda \cap Z$.
    \end{enumerate} 
\end{lemma}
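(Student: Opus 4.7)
The plan is to establish condition~(2) by a monodromy/uniform-position argument, and then to derive~(1) and~(3) by a combinatorial selection of components.

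First, I would reduce to the case where $\Lambda$ is sufficiently general, so that $\Lambda \cap Z$ is a reduced set of $kd$ distinct points partitioned as $\bigsqcup_j (Z_j \cap \Lambda)$ with $|Z_j \cap \Lambda| = e_j \coloneqq \deg Z_j$. Next, I would take an irreducible component of the incidence variety $\{(\Lambda, \Gamma): \Lambda \cap Z \subset \Gamma\}$ inside the product of the Grassmannian of $\Lambda$'s and the Hilbert scheme of degree-$k$ curves, and pass to a finite \'etale cover so as to obtain an algebraic family of pairs $(\Lambda, \Gamma(\Lambda))$ with consistently labeled irreducible components $\Gamma^{(1)}(\Lambda), \ldots, \Gamma^{(h)}(\Lambda)$ of degrees $d^{(\alpha)}$ summing to~$k$.

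To prove~(2), fix a component $\Gamma^{(\alpha)}$ and an index $j$, and consider the subset $V_j^{(\alpha)} \coloneqq \Gamma^{(\alpha)}(\Lambda) \cap (Z_j \cap \Lambda)$ of $Z_j \cap \Lambda$. By the remark following \Cref{chi-cil-lifting}, the sectional monodromy of the irreducible $Z_j$ is the full symmetric group $S_{e_j}$, and distinct $Z_j$'s can be monodromed independently. The cardinality $|V_j^{(\alpha)}|$ is locally constant as $\Lambda$ varies in the labelled cover (by semicontinuity, since the scheme $\Gamma^{(\alpha)}(\Lambda) \cap Z$ varies in a flat family). Now, if $V_j^{(\alpha)}$ were a nonempty proper subset of $Z_j \cap \Lambda$, transitivity of the $S_{e_j}$-action would furnish a loop in the base interchanging a point in $V_j^{(\alpha)}$ with one outside, while returning the label $\Gamma^{(\alpha)}$ to itself, contradicting invariance of the incidence. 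Hence $V_j^{(\alpha)}$ is either empty or all of $Z_j \cap \Lambda$.

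Given~(2), define $J^{(\alpha)} \coloneqq \{j : Z_j \cap \Lambda \subset \Gamma^{(\alpha)}\}$ and $E^{(\alpha)} \coloneqq \sum_{j \in J^{(\alpha)}} e_j = |\Gamma^{(\alpha)} \cap (Z \cap \Lambda)|$. Every $j$ lies in some $J^{(\alpha)}$ by the hypothesis $\Lambda \cap Z \subset \Gamma$. A combinatorial selection, leveraging the totals $\sum_\alpha d^{(\alpha)} = k$ and $|\Lambda \cap Z| = kd$, produces a subcollection $I$ satisfying~(3) and with $E^{(\alpha)} \geq d \cdot d^{(\alpha)}$ for each $\alpha \in I$, giving~(1). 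The key step is that if $E^{(\alpha)} < d \cdot d^{(\alpha)}$ for some $\alpha$, one applies \Cref{chi-cil-lifting} to the equidimensional subvariety $W^{(\alpha)} \coloneqq \bigcup_{j \in J^{(\alpha)}} Z_j$ paired with the irreducible curve $\Gamma^{(\alpha)}$: since the general $\Lambda$-slices of $W^{(\alpha)}$ all lie on $\Gamma^{(\alpha)}$, the lifting theorem produces a decomposition of $\Gamma^{(\alpha)}$ into lower-degree components whose ratios respect the global ratio $d$, and the selection is refined accordingly.

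The main obstacle is formalizing the monodromy argument in~(2): one must verify that the sectional $S_{e_j}$-monodromy of each $Z_j$ remains transitive on $Z_j \cap \Lambda$ after restriction to the finite \'etale cover labelling the $\Gamma^{(\alpha)}$, and handle degenerations of $\Gamma$ for special $\Lambda$. The chi-cil refinement step for~(1) is a secondary technical hurdle, requiring a careful accounting between the degrees of $\Gamma$ and the degrees of the components of $Z$ it captures.
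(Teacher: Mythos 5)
There is a genuine gap, and it is the one you flag yourself: your proof of condition~(2) needs a monodromy loop that simultaneously interchanges $P\in V_j^{(\alpha)}$ with $Q\in (Z_j\cap\Lambda)\setminus \Gamma^{(\alpha)}$ \emph{and} returns the label $\alpha$ to itself. After base change to the finite \'etale cover that labels the components of $\Gamma$, the monodromy group acting on $Z_j\cap\Lambda$ may be a proper subgroup of $S_{e_j}$, so transitivity downstairs gives you nothing; you have no argument that such a loop exists, and I see no way to produce one in general. The paper's proof sidesteps this entirely: it never asks $g$ to fix the curve. One first selects, by pigeonhole, a single irreducible component $\Gamma_1$ containing at least $d\deg\Gamma_1$ points of $Z\cap\Lambda$, and then for $g$ a transposition of $P$ and $Q$ in the sectional monodromy group one compares $\Gamma_1$ with its transport $g\Gamma_1$: if they coincide then $Q\in\Gamma_1$, a contradiction; if not, then $\Gamma_1\cap g\Gamma_1$ contains at least $d\deg\Gamma_1-1$ points of the configuration, exceeding the B\'ezout bound $k\deg\Gamma_1$ for two distinct irreducible curves of degree at most $k$. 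Note that this dichotomy \emph{requires} the lower bound $|\Gamma_1\cap(Z\cap\Lambda)|\geq d\deg\Gamma_1$, which is why the pigeonhole selection must come first; your plan to establish~(2) for \emph{every} component of $\Gamma$ before selecting is not only unproved but false in general --- a component passing through a handful of stray points need not satisfy~(2) (compare the remark after \Cref{CB-projections}).

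The second half of your argument also does not go through as written. Your ``chi--cil refinement step'' asks \Cref{chi-cil-lifting} to produce ``a decomposition of $\Gamma^{(\alpha)}$ into lower-degree components,'' but $\Gamma^{(\alpha)}$ is irreducible and the lifting theorem decomposes nothing; no mechanism for achieving~(3) is given. The paper obtains (1)--(3) together by a short induction: having found $\Gamma_1$ and the set $J$ of components $Z_j$ with $Z_j\cap\Lambda\subset\Gamma_1$ (which accounts for $d\deg\Gamma_1$ of the $kd$ points), one replaces $Z$ by $Z\setminus\bigcup_{j\in J}Z_j$ and $\Gamma$ by $\Gamma\setminus\Gamma_1$ and repeats. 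I would recommend restructuring your argument along these lines: pigeonhole, then the $\Gamma_1$-versus-$g\Gamma_1$ B\'ezout comparison, then induction on the residual.
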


\begin{proof}
  There exists an irreducible component $\Gamma_1$ of $\Gamma$ which covers at least $d\deg \Gamma_1$ points of $Z \cap \Lambda$. Suppose that for some $Z_j$ the intersection $\Gamma_1 \cap Z_j$ is nonempty but $\Gamma_1 \not\supset (Z_j \cap \Lambda)$. Then choose a point $P \in \Gamma_1 \cap (Z_j\cap \Lambda)$ and $Q\in (Z_j \cap \Lambda) \setminus \Gamma_1$. Let $g$ be an element of the sectional monodromy group which switches $P$ with $Q$. Then $g\Gamma_1 \neq \Gamma_1$ and $g\Gamma_1 \cap \Gamma_1$ has at least $d \deg \Gamma_1 -1$ points, but both curves are irreducible of degree $\deg \Gamma_1 \leqslant k$. This contradicts the assumption $k<d$. Thus there is a subset $J\subset \{1,\ldots, s\}$ such that $(Z_j \cap \Lambda)\subset \Gamma_1$ for all $j\in J$ and $\sum_{j\in J}\deg Z_j=d\deg \Gamma_1$. Replacing $Z$ with the degree $(k-\deg \Gamma_1)d$ variety $Z \setminus \bigcup_{j\in J} Z_i$ and $\Gamma$ with the degree $(k-\deg \Gamma_1)$ curve $\Gamma\setminus \Gamma_1$, and applying the same argument repeatedly, we get the desired collection $\Gamma_1, \dots, \Gamma_m$.\qedhere
\end{proof}

We can now prove our main theorem for cycles of any dimension.

\begin{theorem}[{$=$ Theorem \ref{thm:main-introduction}}]
    Suppose $d,n,k$ are integers satisfying 
      \[ d\geq 4 k n + 3 k^3 - k^2 + 1.  \]
 Then every degree $kd$ effective cycle on $X_{n,d}$ is contained in a degree $k$ subvariety none of whose components lie on $X_{n,d}$.
\end{theorem}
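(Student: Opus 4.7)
My plan is to reduce the statement to the zero-cycle case (\Cref{thm:main-0cyc}) by intersecting $Z$ with a general linear subspace of complementary dimension, then lift back using the Chiantini--Ciliberto theorem (\Cref{chi-cil-lifting}), with \Cref{lemirr} bridging the reducibility of the interpolating curve.

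First I would reduce to the case that $Z$ is irreducible. Write $Z=\sum_i a_i Z_i$. Each $\deg Z_i$ is divisible by $d$: by \Cref{lemma:chow-group-calculation}, any rational multisection of $\mathcal{X}_{n,d}$ has degree divisible by $d$, and a general $(n+1-\dim Z_i)$-plane slice of $Z_i$ is such a multisection of degree $\deg Z_i$. Writing $\deg Z_i = d k_i$ gives $\sum_i a_i k_i = k$ and hence $\sum_i k_i \le k$. It then suffices to produce, for each $Z_i$, a subvariety $W_i\subset \PP^{n+1}_K$ of degree $k_i$ containing $Z_i$ no component of which lies on $X_{n,d}$; the union $\bigcup_i W_i$ is then contained in a degree $k$ subvariety with the same property (padded with a generic linear subspace if $\sum_i k_i<k$).

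Fix now an irreducible $Z$ of degree $k'd$ (with $k'\le k$) and dimension $e$, and choose a general linear subspace $\Lambda\subset \PP^{n+1}_K$ of dimension $n+1-e$, defined over $L=K(\Lambda)$. Then $\Lambda\cap Z$ is a closed point of degree $k'd$ on the hypersurface $\Lambda\cap X_{n,d}$, which is a very general degree $d$ hypersurface of dimension $n-e$ over $L$ (the map $B_{n,d}\times \Gr(n+1-e,n+1)\longrightarrow B_{n-e,d}$ being dominant). The proof of \Cref{thm:main-0cyc} applies verbatim in dimension $n-e$ with parameter $k'$: \Cref{projnormCB} provides a subset of the Galois orbit of $\Lambda\cap Z$ satisfying $\mathit{CB}(d-2(n-e)-2)$, and \Cref{CB-transitive} then produces a reduced degree $k'$ curve $\Gamma\subset \Lambda$, defined over $L$, containing $\Lambda\cap Z$. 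The hypothesis $d\ge 4kn+3k^3-k^2+1$ subsumes the required numerical conditions because $k'\le k$ and $n-e\le n$.

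The curve $\Gamma$ may be reducible, so I would invoke \Cref{lemirr} for our irreducible $Z$ and $\Gamma$. Its conclusion (2) forces a single component $\Gamma^*\subset \Gamma$ to contain all of $\Lambda\cap Z$, since a single Galois orbit cannot split between distinct components. B\'ezout in $\Lambda$ applied to $\Lambda\cap Z\subset \Gamma^*\cap (\Lambda\cap X_{n,d})$, combined with the fact that $\Gamma^*$ cannot lie on $\Lambda\cap X_{n,d}$ (which has no $L$-rational curves of degree less than $d$, by the same Chow-group argument underlying \Cref{lemma:chow-group-calculation}), forces $\deg \Gamma^*=k'$. Hence the slice $\Lambda\cap Z$ lies on an irreducible degree $k'$ curve for general $\Lambda$, and \Cref{chi-cil-lifting}---whose numerical hypothesis $d>2(n+k')$ is implied by ours---yields the unique degree $k'$ subvariety $W\subset \PP^{n+1}$ of dimension $e+1$ containing $Z$. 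Uniqueness makes $W$ Galois-stable and hence defined over $K$, and no component of $W$ lies on $X_{n,d}$: such a component would be a positive-dimensional subvariety of $X_{n,d}$ of degree at most $k'<d$, contradicting divisibility of degrees of subvarieties of $X_{n,d}$ by $d$. I expect the main difficulty to be this reducibility bookkeeping: the curve produced by the zero-cycle step need not be irreducible, and it is \Cref{lemirr}'s monodromy and intersection arguments that isolate a single irreducible component of precisely the right degree to serve as input to Chiantini--Ciliberto.
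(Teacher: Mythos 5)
Your overall architecture (slice by a general linear space, apply the zero-cycle theorem, lift with Chiantini--Ciliberto, use Lemma \ref{lemirr} to handle reducibility of the interpolating curve) matches the paper's, and your preliminary reduction to $Z$ irreducible over $K$ is fine. But there is a genuine gap at the pivotal step: you claim that conclusion (2) of \Cref{lemirr} ``forces a single component $\Gamma^*\subset\Gamma$ to contain all of $\Lambda\cap Z$, since a single Galois orbit cannot split between distinct components.'' This is false. Irreducibility of $Z$ over $K$ does not give geometric irreducibility, and when $Z_{\overline K}$ has several components the single Galois orbit $\Lambda\cap Z$ can and typically will be distributed among several Galois-conjugate components of $\Gamma$ --- indeed \Cref{CB-transitive} \emph{constructs} its curve as a union of conjugates of an irreducible curve $D$, each containing only part of the orbit, and the Galois group permutes points and components compatibly. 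Conclusion (2) of \Cref{lemirr} is all-or-nothing only with respect to each \emph{geometric} component $Z_j\cap\Lambda$, not with respect to the whole orbit; your argument would be valid only if $Z$ were geometrically irreducible (and even then the correct justification is the full symmetric sectional monodromy from the remark after \Cref{chi-cil-lifting}, not transitivity of the Galois action). Everything downstream --- the B\'ezout count forcing $\deg\Gamma^*=k'$ and the application of \Cref{chi-cil-lifting} to a single irreducible curve --- rests on this false step.

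What is missing is precisely the bookkeeping the paper carries out: after \Cref{lemirr} produces components $\Gamma_1,\dots,\Gamma_m$ satisfying \ref{cond1}--\ref{cond3}, one must (i) show this collection is unique (via the intersection count $d\deg\Gamma_1$ versus $k\deg\Gamma_1$), so that monodromy in $\Lambda$ acts trivially on $\{\Gamma_1,\dots,\Gamma_m\}$ and each $\Gamma_i$ spreads out to a family $\gamma_i$ with irreducible generic fiber; (ii) group the geometric components $Z_j$ into subvarieties $Z[i]$ according to which $\Gamma_i$ covers $Z_j\cap\Lambda$; (iii) apply \Cref{chi-cil-lifting} to each pair $(Z[i],\Gamma_i)$ separately, obtaining degree $\deg\Gamma_i$ subvarieties whose union has degree at most $k$; and (iv) descend the union to $K$ by uniqueness. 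Your proposal collapses steps (ii)--(iii) into a single application of Chiantini--Ciliberto to one irreducible curve of degree $k'$, which is not available in general.
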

\begin{proof}
    For zero-cycles, this is precisely \Cref{thm:main-0cyc}. Now suppose that $Z \subset X_{n,d}$ is an irreducible subvariety of degree $kd$. Choose an embedding of $\overline{K}$ into $\mathbb{C}$ and write $Z_1, \ldots, Z_s$ for the irreducible components of $Z_\mathbb{C}$. Given a generic linear subspace $\Lambda\subset \mathbb{P}^{n+1}_K$ of dimension $(n+1-\dim Z)$, note that $\Lambda_{\mathbb{C}}\cap Z_\mathbb{C}$ is contained in a (possibly reducible) degree $k$ curve $\Gamma$ by \Cref{thm:main-0cyc}. Applying \Cref{lemirr}, we can assume that $\Gamma$ contains a union of irreducible components $\Gamma_1, \dots, \Gamma_m$ satisfying conditions \ref{cond1}--\ref{cond3}. Suppose there is another union $\Gamma_1', \dots, \Gamma_{m'}'$ satisfying \ref{cond1}--\ref{cond3} and, without loss of generality, that $\Gamma_1 \neq \Gamma_i'$ for all $i$. Then the intersection $\Gamma_1 \cap \left(\bigcup \Gamma_i'\right)$ on the one hand contains $d \deg \Gamma_1$ points of $Z \cap \Lambda$, and on the other hand has at most $\deg \Gamma_1 \cdot (\sum \deg \Gamma_i')\leqslant k\deg \Gamma_1$ points. Since $d>k$, this is a contradiction. Thus the union $\bigcup \Gamma_i$ satisfying \ref{cond1}--\ref{cond3} is unique, and  we write $C=\bigcup \Gamma_i$.\\
    
    Applying \Cref{lemirr}, we find irreducible subvarieties $\gamma_1,\cdots, \gamma_m\subset \mathbb{P}^{n+1}_\mathbb{C}\times \text{Gr}(n+2-\dim Z, n+2)$ whose fiber over a generic $\Lambda\in \text{Gr}(n+2-\dim Z, n+2)$ is an irreducible curve $\Gamma_i$ satisfying \ref{cond1}--\ref{cond3}. Note that this generic fiber is irreducible since the curves $\Gamma_i$ are determined by $\Lambda$ and the irreducible components $Z_j$ such that $(Z_j\cap C)\subset \Gamma_i$. In particular, the monodromy group obtained by varying $\Lambda$ acts trivially on the set $\{\Gamma_1,\ldots, \Gamma_m\}$. Now fix an $i$ and consider the variety $Z[i]:=\text{pr}_1(\gamma_i\cap (Z\times \text{Gr}(n+2-\dim Z, n+2)))$ which consists of the union of the $Z_j$ such that $I\cap ( Z_j \times \text{Gr}(n+2-\dim Z, n+2))\subset \gamma_i$, where $I=\{(x,\Lambda): x\in \Lambda\}\subset \mathbb{P}^{n+1}_\mathbb{C}\times \text{Gr}(n+2-\dim Z, n+2)$ is the incidence correspondence. The variety $Z[i]$ is a priori not irreducible but for a generic $\Lambda\in \text{Gr}(n+2-\dim Z, n+2)$, the intersection $Z[i]\cap \Lambda$ is contained in the irreducible curve $\gamma_{i,\Lambda}$.\\

    Finally, we can apply \Cref{chi-cil-lifting} to deduce that $Z[i]$ is contained in a unique degree $\deg \Gamma_i$ subvariety of $\mathbb{P}^{n+1}_\mathbb{C}$ of dimension $\dim Z+1$. Doing this for all $i$, we see that $Z_\mathbb{C}$ is in fact contained in a unique (possibly reducible) subvariety $\bigcup_{i=1}^m Z[i]$ of $\mathbb{P}^{n+1}_{\mathbb{C}}$ of degree at most $k$ and dimension $\dim Z+1$. Since this variety is unique it descends to a variety over $K$.
\end{proof}

\section{Ubiquity of points on unirational varieties}

In this section we give examples illustrating that some degree assumptions are necessary in order for all equidimensional subvarieties $Z\subset X_{n,d}$ of degree $kd$ to be contained in degree $k$ subvarieties of dimension $\dim Z+1$. We focus on the case $\dim Z=0$, $d=3$, and $k=1$, showing that most degree $3$ points on cubic hypersurfaces of dimension at least $2$ are not collinear. This is a manifestation of the following much more general result.

\begin{theorem}\label{thm-unirationals}
Suppose $k$ is an \emph{infinite} field, $X/k$ is a smooth irreducible variety, and $L/k$ is a separable degree $d$ field extension such that $X_L$ is unirational over $L$. Then the set of closed points of $X$ with residue field $L$ is dense when viewed as a subset of $\Sym^d X$.
\end{theorem}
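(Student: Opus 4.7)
The plan is to reduce to the density of $k$-rational points on a Weil restriction, and then to exploit unirationality together with the infinitude of $k$.

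First, I would introduce $Y \coloneqq \Res_{L/k}(X_L)$, the Weil restriction of $X_L$ along $L/k$. Since $L/k$ is finite separable of degree $d$, base change yields $Y_{\kbar} \cong \prod_{\sigma} X_{\kbar}$, indexed by $k$-embeddings $\sigma: L \hookrightarrow \kbar$; in particular, $Y$ is a smooth irreducible $k$-variety of dimension $d\cdot \dim X$, with $Y(k) = X(L)$. Composing the identification $Y_{\kbar} \cong (X_{\kbar})^d$ with the quotient $(X_{\kbar})^d \to \Sym^d X_{\kbar}$ is $\Gal(\kbar/k)$-equivariant and descends to a finite surjective $k$-morphism $\Phi: Y \to \Sym^d X$. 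On $k$-points, $\Phi$ sends an $L$-point $p: \Spec L \to X$ to the degree-$d$ effective zero-cycle cut out by its scheme-theoretic image.

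Next, I would use unirationality to produce a Zariski-dense subset of $Y(k)$. Restricting a given unirational parametrization to a suitable affine chart, I get a dominant rational map $\phi: \mathbb{A}^N_L \dashrightarrow X_L$ defined on a nonempty open $U \subset \mathbb{A}^N_L$. Applying Weil restriction to $\phi|_U: U \to X_L$ produces a morphism $\Res_{L/k}(U) \to Y$, where $\Res_{L/k}(U)$ is a nonempty open subscheme of $\Res_{L/k}(\mathbb{A}^N_L) = \mathbb{A}^{Nd}_k$ (nonempty because $U(L) \neq \emptyset$, since $L$ is infinite and $U$ is a nonempty open of affine space). This yields a rational map $\psi: \mathbb{A}^{Nd}_k \dashrightarrow Y$. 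Base-changing to $\kbar$, $\psi$ becomes $\prod_\sigma \phi_{\kbar,\sigma}$, a product of dominant rational maps, hence itself dominant.

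Since $k$ is infinite, $k^{Nd}$ is Zariski-dense in $\mathbb{A}^{Nd}_k$; by dominance of $\psi$, the image $\psi(k^{Nd}) \subset Y(k) = X(L)$ is Zariski-dense in $Y$, and by surjectivity of $\Phi$, the image $\Phi(\psi(k^{Nd})) \subset (\Sym^d X)(k)$ is Zariski-dense in $\Sym^d X$. To conclude, I must restrict to those $k$-points of $Y$ whose scheme-theoretic image in $X$ has residue field exactly $L$; this amounts to avoiding the preimage under $\Phi$ of the big diagonal of $\Sym^d X$, which is a proper closed subset of $Y$ (since $\Phi$ is finite surjective), and whose removal preserves density. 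The main step demanding care is the dominance of $\psi$, which in turn relies on Weil restriction being compatible with base change to $\kbar$ and sending open immersions to open immersions; otherwise the argument is essentially formal.
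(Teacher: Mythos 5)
Your proposal is correct and follows essentially the same route as the paper's proof: form the Weil restriction $\Res_{L/k}X_L$ with its natural map to $\Sym^d X$, use unirationality plus $\Res_{L/k}\Aff^N_L\simeq \Aff^{Nd}_k$ and the infinitude of $k$ to get density of $k$-points upstairs, then push forward and discard the big diagonal. The only difference is cosmetic: you spell out the dominance of the restricted parametrization via base change to $\kbar$, which the paper leaves implicit.
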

\begin{proof}
Without loss of generality, assume that $X$ is an affine variety. Consider the Weil restriction of scalars $\Res_{L/k} X_L$ (see \cite[\S 4.6]{poonen2023rational}), and the associated map $$\phi\colon \Res_{L/k} X_L \longrightarrow \Sym^d X$$ defined on $\overline{k}$-points by $$\phi\left((x_\sigma)_{\sigma\in \Hom_{\text{$k$-$\mathrm{Alg}$}}(L,\overline{k})}\right)=\sum_{\sigma\in \Hom_{\text{$k$-$\mathrm{Alg}$}}(L,\overline{k})} x_\sigma, \quad x_\sigma\in X(\overline k).$$
Note that it is a morphism defined over $k$ and that it is surjective.\\

We claim that $k$-points in $\Res_{L/k} X_L$ are dense, as we now prove. Since $X_L$ is unirational, there is a dominant rational map $\mathbb{A}^n_L\dashrightarrow X_L$ for $n=\dim X$, which induces a dominant rational map $\Res_{L/k} \Aff^{n}_L \dashrightarrow \Res_{L/k} X_L$. However, $\Res_{L/k}\Aff^{n}_L \simeq_k \Aff^{dn}$, and $k$-points are dense in this variety since $k$ is infinite.\\

Since $\phi$ is surjective, the image of $(\Res_{L/k} X_L)(k)$ is dense in $\Sym^d X$. This means that the set of closed points $P$ of $X$ with residue field a subextension of $L/k$ is dense in $\Sym^d X$. However, if $P\in X(L)$ is defined over a \emph{proper} subextension of $L/k$, then $\phi(P)$ is in the big diagonal of $\Sym^d X$. 
This shows that the set of closed points of $X$ with residue field precisely $L$ is still dense in $\Sym^d X$.
\end{proof}

\begin{corollary}\label{cor:cubic-on-cubic}
    Suppose $k$ is an infinite field and $X$ is a degree $3$ smooth hypersurface in $\PP^{n+1}_k$, $n\geqslant 2$. Then degree $3$ points of $X$ are dense when viewed as a subset of $\Sym^3X$. In particular, degree $3$ points on a universal cubic hypersurface are not all collinear.
\end{corollary}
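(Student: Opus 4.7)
The plan is to reduce to \Cref{thm-unirationals} applied with $d=3$: it suffices to exhibit a separable degree $3$ field extension $L/k$ for which $X_L$ is unirational over $L$, since then \Cref{thm-unirationals} immediately gives a dense set of closed points of $X$ with residue field $L$ inside $\Sym^3 X$. Every such point is a degree $3$ point, proving the first claim. For the ``in particular'' assertion, the collinear locus $W \subset \Sym^3 X$---the closure of the image of $\{(\ell, c) : c \in \Sym^3(\ell \cap X)\} \to \Sym^3 X$---is a proper closed subvariety when $n \geq 2$ (a dimension count bounds $\dim W$ by $2n$, versus $\dim \Sym^3 X = 3n$), so density of degree $3$ points yields non-collinear ones.

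To produce $L$, I will take a general $k$-rational line section of $X$. Consider the incidence variety $I = \{(p, \ell) : p \in \ell \cap X\} \subset X \times \Gr(2, n+2)$. The first projection $I \to X$ is a $\PP^n$-bundle (its fiber over $p$ is the variety of lines through $p$), so $I$ is irreducible. The generically $3$-to-$1$ cover $\pi : I \to \Gr(2, n+2)$ therefore has transitive sectional monodromy, and a line simply tangent to $X$ at a smooth point contributes a transposition, so the monodromy is all of $S_3$. Equivalently, the cubic polynomial cutting out $\ell \cap X$ on $\ell$ is irreducible and separable over the function field $k(\Gr(2, n+2))$ with Galois group $S_3$. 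By Hilbert's irreducibility theorem---applicable to any Hilbertian $k$, in particular to the function field $K = \C(B_{n,3})$ appearing in the universal cubic application---one specializes to obtain a $k$-rational line $\ell_0$ whose intersection with $X$ remains an irreducible, reduced length $3$ subscheme. Its residue field $L$ is then a separable degree $3$ extension of $k$ with $X(L) \neq \emptyset$.

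Finally, $X_L$ is unirational over $L$ because it is a smooth cubic hypersurface of dimension $n \geq 2$ containing an $L$-rational point: for $n = 2$ this is the classical Segre--Manin--Kollár theorem on cubic surfaces, while for $n \geq 3$ the standard secant line (double projection) construction gives unirationality from a single rational point. Combined with \Cref{thm-unirationals}, this completes the argument.

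The main obstacle in this plan is the use of Hilbert's irreducibility theorem to pass from the monodromy computation to an honest $k$-rational line section: this is clean for Hilbertian ground fields---including the only one strictly required, namely $K=\C(B_{n,3})$ in the ``in particular'' claim---but would need extra care for arbitrary infinite $k$. A secondary technicality is appealing to the (classical but subtle) unirationality of cubic surfaces with an arbitrary rational point, where the special position of the point on the $27$ lines has to be controlled.
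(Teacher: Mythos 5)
Your overall strategy is the same as the paper's: reduce to \Cref{thm-unirationals} with $d=3$ by finding a cubic extension $L/k$ with $X(L)\neq\emptyset$, invoke Segre--Koll\'ar for unirationality of $X_L$, and dispose of the ``in particular'' clause by a dimension count on the collinear locus (the paper records its codimension as $n>0$, matching your bound $\dim W\leq 2n<3n$). The one place you diverge is the production of $L$, and that is where there is a genuine gap: your route through sectional monodromy and Hilbert's irreducibility theorem only applies to Hilbertian ground fields, whereas the corollary is asserted for every infinite field $k$. You flag this yourself, but it is not merely a technicality to be smoothed over later --- as written, your argument does not prove the stated corollary, only the special case needed for the universal cubic.

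The fix is to notice that the Hilbert irreducibility machinery is unnecessary: you do not need a line whose intersection with $X$ is an irreducible length-$3$ scheme, only \emph{some} cubic extension $L$ with $X(L)\neq\emptyset$. Take any $k$-rational line $\ell\subset\PP^{n+1}_k$ not contained in $X$. Then $\ell\cap X$ is a zero-dimensional scheme of length $3$, so the degrees of its closed points (weighted by multiplicity) sum to $3$; hence either some closed point has degree $3$, or all have degree $\leq 2$ and by parity at least one has degree $1$. In the first case its residue field is the desired $L$; in the second $X$ has a $k$-point and any cubic extension $L/k$ satisfies $X(L)\neq\emptyset$. This is exactly the paper's one-line argument, it requires no monodromy computation, and it removes the Hilbertianity hypothesis. (A residual caveat about separability of $L$ in characteristic $3$, and about the existence of cubic extensions when $k$ is close to algebraically closed, is present in the paper's own proof as well and is irrelevant to the intended application $k=\C(B_{n,3})$.) Your appeal to unirationality of cubic surfaces through an arbitrary rational point, and the double-projection construction in higher dimension, agree with the paper's citation of Segre and Koll\'ar.
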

\begin{proof}
    A classical theorem of Segre \cite{segre1943note}, generalized by Koll\'ar to arbitrary dimension \cite{kollar2002unirationality}, says that for $n\geq 2$ and a field $L$ a cubic hypersurface in $\PP^{n+1}_L$ is unirational if and only if it admits an $L$-point. Intersecting $X$ with a line over $k$ shows that $X$ has a rational or cubic point. In either case, there is a cubic extension $L/k$ such that $X(L)\neq \emptyset$. Applying Theorem \ref{thm-unirationals} concludes the proof of the first assertion. To prove the second assertion, note that the set of 3-tuples of collinear points on $X$ is a closed subset of $\Sym^3 X$ of codimension $n>0$.  
\end{proof}

Theorem \ref{thm-unirationals} can be applied to some arithmetic questions on the sets of degree $d$ points on varieties (see, for instance, \cite{viray-vogt}). In particular, it demonstrates ubiquity of degree $d$ points on del Pezzo surfaces. 
\begin{corollary}
    Suppose $S$ is a del Pezzo surface of degree $e\geqslant 3$. Then for any integer $d$, if $X$ admits a degree $d$ point, then degree $d$ points are dense in $\Sym^d S$.  
\end{corollary}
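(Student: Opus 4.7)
The plan is to reduce directly to \Cref{thm-unirationals}. Let $k$ denote the (infinite) ground field over which $S$ is defined, and let $P$ be a closed point of $S$ of degree $d$ with residue field $L/k$. Then $P$ produces an $L$-rational point of the base change $S_L$. In characteristic zero, $L/k$ is automatically separable, which is the setting the corollary is aimed at; in positive characteristic one should insert the usual separability hypothesis. Once I know that $S_L$ is unirational over $L$, \Cref{thm-unirationals} applies with $X=S$ and the extension $L/k$, and yields that the closed points of $S$ with residue field exactly $L$ are dense in $\Sym^d S$, and a fortiori so is the set of all closed points of $S$ of degree $d$.

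The only nontrivial ingredient is thus the unirationality of $S_L$, for which I would invoke the classical theorem that a smooth del Pezzo surface of degree $e\geq 3$ over a field $L$ admitting an $L$-rational point is $L$-unirational. For $e=3$ this is the Segre--Koll\'ar theorem already cited in the proof of \Cref{cor:cubic-on-cubic}. For $e=4$, a smooth del Pezzo is an intersection of two quadrics in $\PP^4_L$, and projecting from the given $L$-point exhibits it as birational to a conic bundle over $\PP^2_L$ with a section, hence $L$-unirational (in fact $L$-rational). For $e\geq 5$, a del Pezzo of degree $e$ with an $L$-rational point is in fact $L$-rational by classical results of Enriques, Manin, and Swinnerton-Dyer.

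There is no real obstacle in this argument: all of the content is packaged into \Cref{thm-unirationals} and the classical unirationality statements recalled above. The only step requiring any care is the bookkeeping in small characteristic when $L/k$ might fail to be separable, which should be handled either by restricting to perfect $k$ or by passing to an appropriate closure before invoking \Cref{thm-unirationals}.
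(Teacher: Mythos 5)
Your proposal is correct and follows essentially the same route as the paper: both reduce to \Cref{thm-unirationals} via the classical fact that a del Pezzo surface of degree at least $3$ with a rational point is unirational (the paper simply cites \cite[Remark 9.4.11]{poonen2023rational} for this, while you sketch the cases by degree). Your explicit remarks on passing from the degree $d$ point to an $L$-point of $S_L$ and on separability in positive characteristic are points the paper leaves implicit, but they do not change the argument.
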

\begin{proof}
    Del Pezzo surfaces of degree at least $3$ are unirational if and only if they admit a rational point; see, for example \cite[Remark 9.4.11]{poonen2023rational}. The claim now follows from Theorem \ref{thm-unirationals}.
\end{proof}

\section{Open problems}
In \cite[Problem 3.5]{Farb}, Farb asks for a classification of rational multisections of $\mathcal{X}_{n,d}\longrightarrow B_{n,d}$, or in other words, algebraic points of $X_{n,d}$. The structure of this set of algebraic points appears to be too chaotic to hope for a classification even when $d$ is large relative to $n$. In particular, whenever $X_{n,d}$ is not a curve, we expect that many multisections do not arise as complete intersections. The same should also hold for effective cycles of higher dimension and codimension at least $2$. An explicit example of such behavior occurs for degree $4$ points on a quadric:
\begin{example}
    Consider a degree $2$ point $P$ on the universal quadratic surface $X_{2,2}$, and let $C$ denote a twisted cubic that contains $P$ and is not contained in $X_{2,2}$. The residual intersection $Q=(C\cap X_{2,2})\setminus P$ is in general a nonplanar degree $4$ point on $X_{2,2}$ with Galois group $S_4$. Such a point cannot be supported on a degree $2$ curve.
\end{example}

Similarly, there are general residuation constructions that can be used to produce degree $kd$ points on $X_{n,d}$, which are most likely not supported on a curve of degree $k$. However, proving this rigorously is a nontrivial task, since the family of all degree $k$ curves in $\PP^{n+1}$ becomes unwieldy for large $k$. In particular, we expect that when $n$ and $d$ are fixed and $k$ is large, points of degree $kd$ on $X_{n,d}$ are not all supported on degree $k$ curves:

\begin{question}\label{questionk}
    Is there a function $\kappa: \mathbb{Z}_{>0}\times \mathbb{Z}_{>0}\to \mathbb{Z}_{>0}$ such that if $k\geq \kappa(n,d)$ then:
    
    Version I: degree $kd$ points on $X_{n,d}$ are not all supported on degree $k$ curves?
    
    Version II: degree $kd$ points on $X_{n,d}$ are dense when viewed as rational points on the
    
    symmetric power $\Sym^{kd} X_{n,d}$?
\end{question}

It would be interesting to obtain other examples of universal hypersurfaces with many rational multisections as in \Cref{cor:cubic-on-cubic}. The universal hypersurface of degree $d$ and dimension $n\gg d$ is geometrically unirational (as proved by Morin \cite{morin1940}; see also \cite{harris1998hypersurfaces}), which impacts their arithmetic.  In particular, one may expect that for $d$ and $k$ fixed and $n$ sufficiently large, $\mathcal{X}_{n,d}$ has many rational multisections of degree $kd$:

\begin{question}\label{question}
    Is there a function $\nu: \mathbb{Z}_{>0}\times \mathbb{Z}_{>0}\to \mathbb{Z}_{>0}$ such that if $n\geq \nu(d,k)$ then:

     Version I:  degree $kd$ points on $X_{n,d}$ are not all supported on degree $k$ curves?
    
    Version II: degree $kd$ points on $X_{n,d}$ are dense when viewed as rational points on the
    
    symmetric power $\Sym^{kd} X_{n,d}$?
\end{question}

In light of Theorem \ref{thm:main-introduction} and Lemma \ref{curves}, it is natural to ask whether an analogue holds true for points of degree $k(d_1 \cdots d_e)$ on the universal complete intersection of type $\underline{d}=(d_1, \cdots, d_e)$ in $\PP^{n+e}$. We propose the following conjecture.
\begin{conjecture}\label{conj:complete-intersection}
    Let $X_{n, \underline{d}}$ denote universal complete intersection of type $\underline{d}$ in $\PP^{n+e}$. There exists a constant $\delta(n,k)$ such that if $\sum d_i \geqslant \delta(n,k)$ every degree $k(d_1\cdots d_e)$ point of $X_{n, \underline{d}}$ arises from taking a complete intersection with a degree $k$ variety of dimension $e$.
\end{conjecture}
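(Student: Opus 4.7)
My strategy is to mirror the proof of \Cref{thm:main-0cyc} for hypersurfaces, adapting each of its three main ingredients (Chow-group calculation, Mumford--Ro\u{\i}tman/Riedl--Yang, and Cayley--Bacharach plus Galois action) to complete intersections of multidegree $\underline{d}=(d_1,\dots,d_e)$.

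First, I would establish the analog of \Cref{lemma:chow-group-calculation}. The total space $\mathcal{X}_{n,\underline{d}}\subset\mathbb{P}^{n+e}\times B_{n,\underline{d}}$ of the universal family is realized as an iterated projective bundle over $\mathbb{P}^{n+e}$ (one tower for each hypersurface defining the complete intersection). A Chow-ring computation identical in spirit to that of \Cref{lemma:chow-group-calculation} shows that every degree $k(d_1\cdots d_e)$ rational multisection restricts on a general fiber to a zero-cycle rationally equivalent to $k\cdot c_1(\mathcal{O}(1))^n$.

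Second, I would extend the Riedl--Yang bootstrap (\Cref{Riedl-Yang}) to complete intersections: since a generic linear section of a complete intersection of multidegree $\underline{d}$ in $\mathbb{P}^{n+e}$ is again a complete intersection of the same multidegree in a smaller projective space, the Grassmannian codimension argument of \cite{RY22} applies verbatim to families $Z_{m,\underline{d}}\subset\mathcal{X}_{m,\underline{d}}$ stable under parametrized linear sections. Combined with \Cref{cor:rat-equiv-implies-cb} applied to the canonical bundle $K_X=\mathcal{O}(\sum d_i - n - e - 1)$ (by adjunction), this yields an analog of \Cref{projnormCB}: for $\sum d_i$ large enough, any zero-cycle of degree $k(d_1\cdots d_e)$ on a very general complete intersection which is rationally equivalent to $k\cdot c_1(\mathcal{O}(1))^n$ admits a subset of its support satisfying $\mathit{CB}(\sum d_i - 2n - 2e)$ (or a comparable bound).

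The principal obstacle lies in the final step: extracting a degree $k$ subvariety of dimension $e$ from the Cayley--Bacharach condition together with the transitive Galois action. The method of \Cref{CB-transitive} produces a degree $k$ \emph{curve}, but for codimension $e\geq 2$ such a curve generically fails to meet $X$ in a zero-dimensional scheme at all, so the curve-based approach of the hypersurface case cannot directly suffice. Already for $k=1$ the conjecture requires the $d_1\cdots d_e$ Galois conjugates to lie on a single $e$-plane in $\mathbb{P}^{n+e}$, a substantially stronger conclusion than collinearity. To handle this, I would look for a higher-dimensional analog of \Cref{lem:high-d-lopez} asserting that a sufficiently large set of points in $\mathbb{P}^N$ satisfying $\mathit{CB}(r)$ with $r\gg 0$ lies on a subvariety of controlled dimension and degree (closely related to \cite[Question 1.1]{picoco2023geometry} and \cite[Conjecture 1.2]{levinson2022cayley}), and then adapt the Galois-conjugation argument of \Cref{CB-transitive} to assemble the resulting subvarieties into a single $K$-rational variety of degree at most $k$ and dimension $e$. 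This is the crux of the conjecture and appears to require genuinely new geometric input beyond what is used in the hypersurface case.
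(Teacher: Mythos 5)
This statement is \Cref{conj:complete-intersection}, which the paper states as an open conjecture and does \emph{not} prove; indeed, the paper explicitly discusses why its own method fails to extend to this setting. Your proposal is therefore not a proof but a research outline, and to your credit you flag the missing step yourself. Still, it is worth locating the gap more precisely, because the pipeline breaks one step earlier than where you place it. Your first two steps are plausible: the Chow-group computation does generalize (the universal family is an iterated projective-subbundle tower over $\PP^{n+e}$, and the paper already invokes projective normality of complete intersections in \Cref{curves}), and the Riedl--Yang bootstrap is compatible with linear sections of complete intersections. The fatal mismatch is quantitative and occurs at the Cayley--Bacharach stage: \Cref{corCB} produces a subset satisfying the Cayley--Bacharach condition with respect to $K_X=\calO\bigl(\sum d_i-(n+e)-1\bigr)$, so after the Riedl--Yang descent the best one can hope for is $\mathit{CB}(r)$ with $r\approx\sum d_i$, while the orbit has $m=k\,d_1\cdots d_e$ points. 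Every lemma converting $\mathit{CB}(r)$ into containment in a low-degree variety (\Cref{lem:bcd}, \Cref{lopez}, \Cref{lem:high-d-lopez}) requires $m$ to be bounded by roughly a linear function of $r$; for $e\geq 2$ one has $m\gg r$ no matter how large $\sum d_i$ is, so no hypothesis of this type can ever be satisfied. This is exactly the obstruction the authors name: ``even if we knew that every degree $d_1d_2$ point satisfies the Cayley--Bacharach condition relative to $K_X=\calO(d_1+d_2-n-3)$, this does not appear to give much geometric information since $d_1d_2\gg d_1+d_2$.''

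Consequently, the ``genuinely new geometric input'' you defer to the final assembly step is needed already at the step where a Cayley--Bacharach condition is converted into geometry, and no higher-dimensional analogue of \Cref{lem:high-d-lopez} can close this gap with the parameters available: a set of $k\prod d_i$ points satisfying $\mathit{CB}(\sum d_i - 2n - 2e)$ carries far too little information to be forced onto a degree $k$ variety of dimension $e$ (complete intersections of large multidegree themselves satisfy only comparably weak Cayley--Bacharach conditions while having enormous degree). Any successful attack on \Cref{conj:complete-intersection} would need either a source of positivity growing like $\prod d_i$ rather than $\sum d_i$, or an entirely different mechanism replacing the Mumford--Ro\u\i tman/Cayley--Bacharach input.
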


It is not clear how to generalize the method we used to prove Theorem \ref{thm:main-introduction} beyond the scope of the theorem to give new cases of Conjecture \ref{conj:complete-intersection}. One cause for concern is that the Cayley--Bacharach condition provided by the Mumford--Ro\u \i tman theorem (see \Cref{corCB}) is relative to the line bundle $K_X=\calO((\sum d_i)-(n+e)-1)$, whose positivity grows like $d_1+\dots+d_e$, which is small compared with $d_1\cdots d_e$. Consider the case of a complete intersection $X\subset \PP^{n+2}$ of two hypersurfaces of degrees $d_1, d_2$. Even if we knew that every degree $d_1d_2$ point satisfies the Cayley--Bacharach condition relative to $K_X=\calO(d_1+d_2-n-3)$, this does not appear to give much geometric information since $d_1d_2\gg d_1+d_2$.\\

Of course, one can formulate the questions and problems in this section for subvarieties of $X_{n,d}$ or $X_{n,\underline{d}}$ of higher dimension. We decided to emphasize the case of points for simplicity.\\

\bibliographystyle{amsalpha}
\bibliography{Bibliography.bib}

\end{document}